\documentclass[11pt,reqno,final]{article}
\usepackage{url}
\usepackage{amsmath,amsfonts,amssymb,bm,amsthm,version}
\usepackage{mathrsfs,fancybox,pifont}
\usepackage{graphicx}
\usepackage{algorithm}
\usepackage{subcaption }
\usepackage{algpseudocode}
\usepackage{booktabs}
\usepackage{cite}
\usepackage{setspace}
\usepackage{url,hyperref}
\usepackage{fullpage} 
\usepackage[notcite,notref]{showkeys}
\usepackage{xcolor}
\usepackage{multirow}
\usepackage{epstopdf}
\usepackage{cases}
\usepackage{geometry}
\usepackage{mathtools}
\usepackage{authblk}
\usepackage{lipsum}
\usepackage{indentfirst}

\newcommand{\dt}{\operatorname{d}\! t}
\newcommand{\ds}{\operatorname{d}\! s}

\newcommand{\dw}{\operatorname{d}\! W}

\allowdisplaybreaks

\numberwithin{equation}{section}

\theoremstyle{plain}
\newtheorem{theorem}{Theorem}
\newtheorem{lemma}{Lemma}
\newtheorem{assumption}{Assumption}

\theoremstyle{definition}

\theoremstyle{remark}
\newtheorem{remark}{Remark}

\title{A Computable Stochastic Riccati Equations Framework for Mean--Variance Portfolio Selection with Multifactor Stochastic Volatility Model}

\author{
Zhecheng Huang, School of Mathematical Sciences Fudan University \& China Pacific Insurance (Group) Co., \href{mailto:huangzhecheng@fudan.edu.cn}{huangzhecheng@fudan.edu.cn}
\and
Guojiang Shao, School of Mathematical Sciences Fudan University, \href{mailto:gjshao23@m.fudan.edu.cn}{gjshao23@m.fudan.edu.cn}
\and   
Lei Wang, China Pacific Insurance (Group) Co.
\and Qi Zhang, School of Mathematical Sciences Fudan University \& State Key Laboratory of Cryptography and Digital Economy Security, Shandong University
}

\begin{document}
\maketitle

\begin{abstract}
We investigate a computable and empirically implementable framework for
continuous-time mean--variance optimal portfolio selection with random market
coefficients. The market model is built on a tractable multifactor stochastic
volatility structure, which captures state-dependent risk premia, stochastic
volatility, and dynamic cross-asset dependence. The optimal control is
characterized by stochastic Riccati equations. On the computational side, we design an iterative BSDE-based procedure to approximate the stochastic Riccati equation from above and below, where the
initial upper and lower bounds are obtained by solving two linear BSDEs. 
We then apply a logarithmic transformation to remove the singularity in the SRE, and solve the
resulting transformed equation using both Deep BSDE and DBDP methods. The linear BSDE bounds also provide effective initial-value estimates, improving the convergence speed and training stability of the Deep BSDE solver. Empirical experiments based on sector ETF data show that the proposed multifactor mean--variance strategy produces smooth target-return wealth dynamics, with improved drawdown control and downside-risk protection relative to benchmark strategies. These results demonstrate the practical potential of combining stochastic Riccati equations, neural BSDE solvers, and multifactor market modeling for dynamic asset allocation.
\end{abstract}


\section{Introduction}

Mean--variance portfolio selection has been a central topic in mathematical finance since the seminal work of Markowitz \cite{markowitz1952modern}. Unlike expected-utility maximization, the mean--variance criterion involves both the expected terminal return and the variance of terminal wealth, leading to a nonseparable objective through the square of an expectation. In continuous time, this difficulty is commonly handled by embedding the original problem into a family of auxiliary stochastic LQ control problems, an approach developed by Zhou and Li \cite{zhou2000continuous}; see also Li and Ng \cite{li2000optimal} for the discrete-time formulation. In non-Markovian markets, this LQ reformulation naturally leads to scalar SREs. The importance of such equations has been well recognized in the stochastic control literature: Kohlmann and Tang \cite{kohlmann2002global} established their global adapted solvability and applied the result to mean--variance hedging, while Lim and Zhou \cite{lim2002mean} and Lim \cite{lim2004quadratic} further developed mean--variance portfolio selection and quadratic hedging in complete and incomplete markets. These works show that scalar SREs are not merely a technical byproduct of the LQ formulation, but a fundamental analytical tool for mean--variance optimization under random market coefficients.

The well-posedness theory of stochastic Riccati equations has been a central issue in stochastic LQ control. The study goes back to the pioneering work of Bismut \cite{bismut1976linear}, where stochastic LQ problems with random coefficients and the associated Riccati-type equations were first investigated. Subsequent developments include the stochastic LQ theory and maximum principle approach summarized in Yong and Zhou \cite{yong1999stochastic}, the indefinite stochastic LQ framework of Chen, Li and Zhou \cite{chen1998stochastic,chen2000stochastic}, and the general random-coefficient theory of Tang \cite{tang2003general}, where the associated stochastic Riccati equation is solved through its connection with linear stochastic Hamiltonian systems.

In the scalar case, stochastic Riccati equations are closely related to BSDEs with quadratic growth. Indeed, the martingale component of a scalar SRE typically enters the generator through quadratic terms, while structural conditions such as the positivity and boundedness of $P$ are essential for the associated feedback representation to be well defined. This connection plays a central role in the one-dimensional theory developed by Kohlmann and Tang \cite{kohlmann2002global} and Lim \cite{lim2004quadratic}. From the BSDE perspective, scalar SREs can be viewed as a distinguished class of quadratic BSDEs. The foundational existence and uniqueness theory for one-dimensional quadratic BSDEs with bounded terminal conditions was established by Kobylanski \cite{kobylanski2000backward}. These results provide the analytical background for treating scalar stochastic Riccati equations through quadratic-BSDE techniques and motivate the logarithmic transformations and monotone iterative schemes considered in this paper.

Classical continuous-time stock models are often formulated under the Black--Scholes framework, where the drift and diffusion coefficients of risky assets are assumed to be constants. Although analytically convenient, this specification is too restrictive to capture important empirical features of financial markets, such as volatility clustering, stochastic fluctuations in risk premia, and volatility smiles or smirks. To overcome these limitations, stochastic factor models have been widely introduced into portfolio selection problems. A representative example is the Heston model \cite{heston1993closed}, in which the variance follows a mean-reverting square-root diffusion. This model captures several facts of asset returns and has been extensively used in option pricing and portfolio optimization. Early contributions to portfolio selection under stochastic volatility include Zariphopoulou \cite{zariphopoulou2001solution}, Kraft \cite{kraft2005optimal}, Liu \cite{liu2001dynamic,liu2007portfolio}, and Chacko and Viceira \cite{chacko2005dynamic}. Related developments further incorporate robust preferences, insurance and reinsurance decisions, and mean--variance criteria under square-root factor models; see, for example, Yi et al. \cite{yi2013robust} and Pham \cite{pham2009continuous}. In particular, Shen \cite{shen2015mean} and Shen and Zeng \cite{shen2015optimal} studied mean--variance problems with unbounded random market coefficients and established the well-posedness of the associated SREs. Their key arguments rely on a change-of-measure method, together with suitable exponential integrability conditions imposed on the unbounded coefficients.

Although the classical Heston model captures stochastic volatility through a single square-root variance factor, empirical studies suggest that volatility dynamics often exhibit richer multifactor structures. Multifactor stochastic volatility models are better suited to describing the persistence, term structure, and cross-sectional behavior of volatility; see, for example, Christoffersen et al. \cite{christoffersen2009shape}. Motivated by this observation, we introduce a multifactor Heston-type market
model for multi-asset portfolio selection. In this model, the drift and
diffusion coefficients are driven by CIR-type latent factors, including
asset-specific volatility factors and a market-wide volatility factor, so as to
capture idiosyncratic fluctuations and common market risk in a unified
framework.

Recent deep learning methods for BSDEs and FBSDEs provide an effective mesh-free framework for solving high-dimensional stochastic control problems and high-dimensional PDEs. The deep BSDE method of Han, Jentzen, and E \cite{han2018solving} approximates the martingale integrand of a BSDE by neural networks along simulated forward paths. For coupled systems, Han and Long \cite{han2020convergence} analyzed the convergence of the deep BSDE method for coupled FBSDEs. Huré et al. \cite{hure2020deep} proposed deep backward dynamic programming (DBDP) schemes, which exploit the BSDE representation and approximate the solution and its gradient recursively by backward induction over the time grid. More closely related to the Hamiltonian structure of stochastic control, Ji et al. \cite{ji2022solving} developed a deep learning approach based on the stochastic maximum principle, in which the original control problem is reformulated as a stochastic Hamiltonian system, or equivalently as an FBSDE with a maximum condition. These works provide the numerical and methodological foundation for our BSDE-based and Hamiltonian-flow-based treatment of stochastic Riccati equations.

The main contribution of this work is threefold. First, we formulate a multi-asset mean--variance investment problem under a multifactor Heston-type market model with both idiosyncratic and market-wide volatility factors, and derive the associated stochastic Riccati equation. Second, we design two deep learning solvers tailored to the Riccati structure: a quadratic BSDE iteration and a logarithmic quadratic-BSDE transformation. Third, we provide a systematic numerical comparison of these algorithms, highlighting their respective advantages in terms of stability, approximation accuracy, and computational efficiency. Our results suggest that incorporating Riccati-specific transformations and stochastic-control structure is crucial for reliable deep learning approximation of mean--variance portfolio problems with random market coefficients.

The closest work to ours is Gnoatto et al.~\cite{gnoatto2025deep}, which develops a deep BSDE method for quadratic hedging in incomplete markets and also involves a stochastic Riccati equation in the mean--variance hedging case. Their Heston-type SRE has a special structure that admits a closed-form representation through deterministic Riccati ODEs. In contrast, we study mean--variance portfolio selection under a multifactor Heston-type market model and focus on the direct numerical treatment of the resulting stochastic Riccati equation. We propose Riccati-oriented BSDE algorithms, including an iterative bounding scheme and a logarithmic transformation method, and test them with both simulated factors and market data. Thus, our focus is on
computational treatment and empirical implementation, while the general
well-posedness problem for SREs with genuinely unbounded random coefficients is
not addressed in this paper.

The rest of the paper is organized as follows. Section~2 presents the
continuous-time mean--variance portfolio problem and the associated stochastic
Riccati equation. Section~3 introduces two BSDE-based algorithms for computing
the SRE. Section~4 develops the multifactor stochastic volatility model and its parameter calibration. Section~5 reports the deep-learning numerical results for the SRE, while Section~6 provides the empirical portfolio analysis and benchmark comparisons. Section~7 concludes. Proofs, implementation details, and additional figures are deferred to the appendices.

\section{Problem Formulation}

Let $(\Omega, \mathcal{F}, \mathbb{P})$ be a complete probability space equipped with an $n$-dimensional standard Brownian motion $\{W(t)\}_{t \in [0,T]}$. Let $\mathbb{F} = \{\mathcal{F}_t\}_{t \in [0,T]}$ denote the natural filtration generated by $W$, augmented by all the $\mathbb{P}$-null sets in $\mathcal{F}$. The superscript $^*$ denotes the transpose of a vector or a matrix, and we say a matrix $A \geq 0$ if $A$ is positive semidefinite almostly surely. We use $I_n$ to denote the $n$-order identity matrix, use $\langle \cdot, \cdot \rangle$ and $|\cdot|$ to denote the standard inner product and the corresponding induced norm in a Euclidean space, respectively. For any $m$, $j \in \mathbb{N}$, we introduce the following spaces that will be used throughout this paper:
\begin{align*}
\begin{array}{rl} 
L_{\mathcal{F}_T}^{\infty}\left( \mathbb{R}^m \right): &\hspace{-0.25cm} \text {the set of $\mathcal{F}_T$-measurable and essentially bounded } \mathbb{R}^m \text{-valued random variables;}\\ 
L_{\mathbb{F}}^{\infty}\left(0, T ; \mathbb{R}^{m \times j} \right): &\hspace{-0.25cm} \text {the set of $\mathbb{F}$-adapted essentially bounded } \mathbb{R}^{m \times j} \text{-valued processes};\\
L_{\mathbb{F}}^{\infty}\left(0, T ; \mathbb{R}_{>0}\right): &\hspace{-0.25cm} \text {the set of $\mathbb{F}$-adapted essentially bounded positive processes};\\
L_{\mathbb{F}}^{\infty}\left(0, T ; \mathbb{R}_{\gg 1}\right): &\hspace{-0.25cm} \text {the set of $\mathbb{F}$-adapted processes } v:[0, T] \times \Omega \rightarrow(0,+\infty) ~\text{such}\\
&\hspace{-0.25cm}\text {that $c^{-1} \leqslant v(t) \leqslant c$ a.e. a.s. for some constant $c>0$};\\
L_{\mathbb{F}}^2\left(0, T ; \mathbb{R}^m \right): &\hspace{-0.25cm} \text {the set of $\mathbb{F}$-adapted processes } v:[0, T] \times \Omega \rightarrow \mathbb{R}^m ~\text {such that } \\
&\hspace{-0.25cm}\mathbb{E}\left[\int_0^T|v(t)|^2 \dt\right]<\infty;\\
L_{\mathbb{F}}^{2,c}\left(0,T; \mathbb{R}\right): &\hspace{-0.25cm} \text {the set of $\mathbb{F}$-adapted processes $v:[0, T] \times \Omega \rightarrow \mathbb{R}$ with} \\
&\hspace{-0.25cm}\text {continuous sample paths such that } \mathbb{E}\left[\sup_{t \in[0, T]}|v(t)|^2\right]<\infty; \\
L_{\mathbb{F}}^{\infty,c}\left(0,T; \mathbb{R}\right): &\hspace{-0.25cm} \text {the set of $\mathbb{F}$-adapted essentially bounded  processes $v:[0, T] \times \Omega \rightarrow \mathbb{R}$ with} \\
&\hspace{-0.25cm}\text {continuous sample paths}. 
\end{array}
\end{align*}

We consider a financial market consisting of $m+1$ securities: one risk-free bond and $m$ risky stocks. The bond price process, denoted by $S_0(t)$, evolves according to the following ordinary differential equation (ODE):
$$
\left\{\begin{array}{l}
\mathrm{d} S_0(t) = r(t) S_0(t) \dt, \qquad t \in (0, T], \\
S_0(0) = s_0 > 0,
\end{array}\right.
$$
where the interest rate $r(\cdot) \in L_{\mathbb{F}}^{\infty}(0, T ; \mathbb{R}_{>0})$. The price process of the $i$-th stock, $S_i(t)$ for $i=1, \cdots, m$, is governed by the following stochastic differential equation (SDE):
$$
\left\{\begin{array}{l}
\mathrm{d} S_i(t) = S_i(t)\left\{\mu_i(t) \dt + \sum_{j=1}^n \sigma_{i j}(t) \dw_j(t)\right\}, \qquad t \in(0, T], \\
S_i(0) = s_i > 0,
\end{array}\right.
$$
where $\mu_i(t)$ and $\sigma_i(t) := [\sigma_{i 1}(t), \cdots, \sigma_{i n}(t)]$ represent the appreciation rate and the volatility (or dispersion) vector of the $i$-th stock, respectively. We further assume that $\mu_i(\cdot) \in L_{\mathbb{F}}^{\infty}(0, T ; \mathbb{R}_{>0})$ and $\sigma_{i j}(\cdot) \in L_{\mathbb{F}}^{\infty}(0, T ; \mathbb{R}_{\gg 1})$.

\begin{remark}
    Note that the assumption $r(t)>0$ is imposed merely to reflect typical financial market conditions. The main results of this paper, however, do not rely on this strict positivity, the same holds true for the assumptions placed on $\mu_i(t)$ and $\sigma_{ij}(t)$.
\end{remark}
Define the appreciation rate vector $\mu(t)$ and the volatility matrix $\sigma(t)$ as follows:
$$
\mu(t) := \left[\begin{array}{c} \mu_1(t) \\ \vdots \\ \mu_m(t) \end{array}\right] \in \mathbb{R}^{m} \qquad \text{and} \qquad \sigma(t) := \left[\begin{array}{c} \sigma_1(t) \\ \vdots \\ \sigma_m(t) \end{array}\right] \in \mathbb{R}^{m \times n}.
$$
Consequently, we have $\mu(\cdot) \in L_{\mathbb{F}}^{\infty}(0, T ; \mathbb{R}^{m})$ and $\sigma(\cdot) \in L_{\mathbb{F}}^{\infty}(0, T ; \mathbb{R}^{m \times n})$. We assume throughout this paper that $\sigma(t)$ is uniformly non-degenerate; that is, there exists a constant $\varepsilon > 0$ such that
$$
\sigma(t) \sigma^*(t) \geqslant \varepsilon I_{m \times m}, \quad \text{for all } t \in [0, T], \quad \mathbb{P}\text{-a.s.}
$$
Let $\mathbf{1}_m = [1,1, \cdots, 1]^* \in \mathbb{R}^m$, and define the excess return vector as $\tilde{\mu}(t) := \mu(t) - r(t)\mathbf{1}_m \in \mathbb{R}^m$. The risk premium process $\rho(t)$ is then given by
$$
\rho(t) := \sigma^*(t)(\sigma(t) \sigma^*(t))^{-1} \tilde{\mu}(t) \in \mathbb{R}^{n}, \qquad t \in [0, T].
$$

Let $X(t)$ denote the total wealth of an agent at time $t \in [0, T]$. Assuming continuous-time trading with no transaction costs or intermediate consumption, the wealth process evolves according to the following SDE:
\begin{equation} \label{state}
\left\{\begin{aligned}
\mathrm{d}X(t) &= \left\{r(t) X(t) + \sum_{i=1}^m [\mu_i(t) - r(t)] \pi_i(t)\right\} \dt + \sum_{i=1}^m \sum_{j=1}^n \sigma_{i j}(t) \pi_i(t) \dw_j(t), \quad t \in (0, T], \\
X(0) &= x_0 > 0,
\end{aligned}\right.
\end{equation}
where $\pi_i(t)$ represents the total market value invested in the $i$-th risky asset at time $t$. A negative value $\pi_i(t) < 0$ for $i \in \{1, \cdots, m\}$ indicates short-selling the $i$-th stock, while $\pi_0(t) < 0$ implies borrowing at the risk-free rate $r(t)$. We define the agent's portfolio strategy as the vector process $\pi(t) := [\pi_1(t), \cdots, \pi_m(t)]^*$. Note that the amount invested in the bond, $\pi_0(t)$, is excluded from $\pi(t)$ because it is entirely determined by the self-financing condition $\pi_0(t) = X(t) - \sum_{i=1}^m \pi_i(t)$. To proceed, we denote rewrite the state equation \eqref{state} in the following compact form:
\begin{equation} \label{dynamics}
    \left\{\begin{aligned}
        \mathrm{d}X(t) &= [r(t)X(t) + \langle \tilde{\mu}(t), \pi(t) \rangle]\dt + \langle \sigma^*(t)\pi(t), \dw(t) \rangle, \\
        X(0) &= x_0.
    \end{aligned}\right.
\end{equation}

The class of admissible controls is defined as the set
$$
\mathcal{A}:=\left\{\pi(\cdot) \in L_{\mathcal{F}}^2\left(0, T ; \mathbb{R}^m\right) \right\} .
$$

If $u(\cdot) \in \mathcal{A}$ and $X(\cdot)$ is the associated solution of \eqref{dynamics}, then we refer to $(X(\cdot), \pi(\cdot))$ as an admissible pair.
The agent's objective is to find an admissible portfolio $\pi(\cdot) \in \mathcal{A}$ that minimizes the risk, measured by the variance of the terminal wealth, while achieving a prescribed expected terminal wealth $\mathbb{E}[X(T)] = d$ for a given target $d \in \mathbb{R}$. Note that under this expectation constraint, the variance can be expressed as
$$
\operatorname{Var}(X(T)) = \mathbb{E}\left[|X(T) - \mathbb{E}[X(T)]|^2\right] = \mathbb{E}\left[|X(T) - d|^2\right].
$$

and formulate the following constrained stochastic control problem, parameterized by the target $d \in \mathbb{R}$:
\begin{equation} \label{prob1}
    \begin{array}{rl}
        \min\limits_{\pi(\cdot) \in \mathcal{A}} & \mathcal{J}(\pi(\cdot), d) \triangleq \operatorname{Var}(X(T)) = \mathbb{E}\left[|X(T) - d|^2\right], \\
        \text{subject to} & \left\{
        \begin{array}{l}
            \mathbb{E}[X(T)] = d, \\
            (X(\cdot), \pi(\cdot)) \text{ satisfies } \eqref{dynamics}.
        \end{array}
        \right.
    \end{array}
\end{equation}

{ \bf Problem \ref{prob1} } is a classic stochastic control problem, whose solution procedure and feasibility condition can be found in many references (e.g. kohlmann and Tang \cite{kohlmann2002global}, Lim \cite{lim2004quadratic}). We will provide a brief solution procedure in the Appendix \ref{solving-MV}. In particular, the optimal strategy is associated with the following SRE with random coefficients:
\begin{equation} \label{sre}
    \left\{\begin{aligned}
        \mathrm{d}P &= - \left[(2r-|\rho|^2)P - 2 \langle \rho,\Lambda \rangle - \frac{1}{P} \Lambda^*\sigma^* (\sigma\sigma^*)^{-1} \sigma \Lambda \right]\dt + \langle \Lambda,\dw\rangle, \quad t \in [0, T),  \\
        P(T) & = 1, \\
        P(t) & > 0, \qquad t \in [0, T), \\
    \end{aligned} \right.
\end{equation}
the numerical schemes for solving this type SRE \eqref{sre} are the core contribution of this paper whose specific details will be provided in the next section. Furthermore, for notational simplicity, we will frequently suppress the time variable $t$ in processes and parameters when no confusion arises.

A major technical challenge lies in computing the value of $(P,\Lambda)$ and optimal stragety $\pi^*$ given by \eqref{strategy}, which requires solving the highly singular stochastic Riccati equation \eqref{sre}. To bridge the gap between theoretical existence and numerical tractability, we propose different numerical methods below. For later use, we recall the standard results for BSDEs with quadratic growth in $z$ in Appendix \ref{appendix-quadratic}.

\section{Numerical Schemes for SRE}

\subsection{Numerical Scheme Through Iterative BSDEs with Quadratic Growth}

Below, we will provide the monotone iterative scheme of SRE \eqref{sre}, define the approximating sequence $\{P_k, \Lambda_k\}_{k \geq 1}$ iteratively. Let $(P_1, \Lambda_1)$ be the solution to the BSDE:
\begin{equation} \label{sre1}
    \left\{\begin{aligned}
        \mathrm{d}P_1 &= - \left[(2r-|\rho|^2)P_1 - 2 \langle \rho,\Lambda_1 \rangle - \frac{1}{P_1} \Lambda_1^* \Lambda_1 \right]\mathrm{d}t + \langle \Lambda_1,\mathrm{d}W \rangle, \qquad t \in [0,T), \\
        P_1(T) &= 1.
    \end{aligned} \right.
\end{equation}
The well-posedness of BSDE \eqref{sre1} is a classical result, we note that $\tilde{P}_1 := 1/P_1$ and $\tilde{\Lambda}_1 := -\Lambda_1/P_1^2$ satisfies a linear BSDE
\begin{equation} \label{linear-P1}
    \mathrm{d}\tilde{P}_1 = [(2r-|\rho|^2)\tilde{P}_1 + 2 \langle \rho,\tilde{\Lambda}_1 \rangle ]\mathrm{d}t + \langle \tilde{\Lambda}_1,\mathrm{d}W\rangle,
\end{equation}
which guarantees $\tilde{P}_1 \in L_{\mathbb{F}}^{\infty,c}(0, T; \mathbb{R}_{\gg 1})$ and thus $P_1 \in L_{\mathbb{F}}^{\infty,c}(0, T; \mathbb{R}_{\gg 1})$ is well-defined. For $k \geq 1$, we construct the iteration:
\begin{equation} \label{iteration_P}
    \left\{\begin{aligned}
        \mathrm{d}P_{k+1} &= - \left[(2r-|\rho|^2)P_{k+1} - 2 \langle \rho,\Lambda_{k+1} \rangle - \frac{1}{P_k} \Lambda_{k+1}^* \Pi \Lambda_{k+1} \right]\mathrm{d}t + \langle \Lambda_{k+1},\mathrm{d}W \rangle, \quad t \in [0,T), \\
        P_{k+1}(T) &= 1,
    \end{aligned} \right.
\end{equation}
where $\Pi := \sigma^* (\sigma\sigma^*)^{-1} \sigma$ is the orthogonal projection matrix. The well-posedness of BSDE \eqref{iteration_P} is immediately derived from Theorem \ref{BSDE-quadratic} by induction.

Symmetrically, we define the reciprocal system to obtain an upper bound. Let $K := 1/P$ and $\Xi := -\Lambda/P^2$. Applying It\^{o}'s formula, $K$ satisfies a BSDE where the singular term involves the matrix $I_{n} - \Pi$:
\begin{equation*}
    \left\{\begin{aligned}
        \mathrm{d} K &= \left[\left(2 r-|\rho|^2\right) K + 2 \langle\rho, \Xi\rangle + \frac{1}{K} \Xi^*\left(I_n - \Pi \right) \Xi\right] \dt+\langle\Xi, \dw \rangle, \qquad t \in [0, T),  \\
        K(T) &= 1.
    \end{aligned}\right.
\end{equation*}
We then define the iterative sequence $\{K_k, \Xi_k\}_{k \geq 1}$,
\begin{equation} \label{sre2}
    \left\{\begin{aligned}
    \mathrm{d} K_1 &= \left[\left(2 r-|\rho|^2\right) K_1 + 2\langle\rho, \Xi_1\rangle + \frac{1}{K_1} \Xi_1^* \Xi_1 \right] \dt+\langle\Xi_1, \dw \rangle, \qquad t \in [0, T),  \\
    K_1(T) &= 1,
    \end{aligned}\right.
\end{equation}
and
\begin{equation} \label{iterative_K}
    \left\{\begin{aligned}
    \mathrm{d} K_{k+1} & = \left[\left(2 r-|\rho|^2\right) K_{k+1} + 2\langle\rho, \Xi_{k+1} \rangle+ \frac{1}{K_k} \Xi_{k+1}^*(I_{n} - \Pi) \Xi_{k+1} \right] \mathrm{d}t+\langle \Xi_{k+1}, \mathrm{d}W \rangle,  \quad t \in [0, T),  \\
    K_{k+1}(T) &= 1,
    \end{aligned}\right.
\end{equation}
for $k \geq 1$. Similarly, if we assume that $\tilde{K}_1 := 1 / K_1$ and $\tilde{\Xi}_1 := - \Xi_1 / K_1^2$, then $(\tilde{K}_1 ,\tilde{\Xi}_1 )$ satisfies
\begin{equation} \label{linear-K1}
    \mathrm{d}\tilde{K}_1 =  [-(2r-|\rho|^2)\tilde{K}_1 + 2 \langle \rho,\tilde{\Xi}_1 \rangle ]\mathrm{d}t + \langle \tilde{\Xi}_1,\mathrm{d}W\rangle,
\end{equation}
which implies the well-posedness of BSDE \eqref{sre2}, the well-posedness of BSDE \eqref{iterative_K} can be proved similar to \eqref{iteration_P}. Noting that $z^* (I_{n} - \Pi) z = \inf_{x \in \mathbb{R}^m} |z - \sigma^* x|^2 \geq 0$, the comparison principle in Theorem \ref{BSDE-quadratic} implies the monotonicity of these sequences. More specifically, we have the following inequality estimates:

\begin{theorem} \label{bounds}
    The sequences $\{P_k\}$ and $\{K_k\}$ provide monotonically increasing and decreasing bounds for the solution $P$ of the original SRE \eqref{sre}:
    \begin{equation} \label{algo-one}
        P_1 \leq P_2 \leq \cdots \leq P_k  \leq \cdots \leq P = \frac{1}{K} \leq \cdots \leq \frac{1}{K_k} \leq \cdots \leq \frac{1}{K_1}, \quad a.s.
    \end{equation}
    Furthermore, as $k \to \infty$, we have the following convergences in $L_{\mathbb{F}}^2(0,T;\mathbb{R})$:
    \begin{equation*}
        \lim_{k \to \infty} \mathbb{E} \int_{0}^{T} \left\{ |P_k(t) - P(t)|^2 + |\Lambda_k(t) - \Lambda(t)|^2 \right\} \mathrm{d}t = 0,
    \end{equation*}
    and
    \begin{equation*}
        \lim_{k \to \infty} \mathbb{E} \int_{0}^{T} \left\{ |P(t)K_k(t) - 1|^2 + |P^2(t)\Xi_k(t) + \Lambda(t)|^2 \right\} \mathrm{d}t = 0.
     \end{equation*}
\end{theorem}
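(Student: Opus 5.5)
We prove the chain \eqref{algo-one} by repeated use of the comparison principle in Theorem~\ref{BSDE-quadratic}, and then derive the convergence from monotone limits together with a stability argument for quadratic BSDEs. Throughout we assume that the SRE \eqref{sre} has a unique solution $(P,\Lambda)$ with $P\in L_{\mathbb{F}}^{\infty,c}(0,T;\mathbb{R}_{\gg 1})$, as recalled in Appendix~\ref{solving-MV} via Kohlmann--Tang.

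\emph{Lower bounds.} Write the generator of \eqref{iteration_P} as $f_k(p,\lambda)=(2r-|\rho|^2)p-2\langle\rho,\lambda\rangle-\frac{1}{P_k}\lambda^*\Pi\lambda$, and let $f_0$ denote the generator of \eqref{sre1}, which uses $\frac1p\lambda^*\lambda$.

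First we show $P_1\le P$. Since $\lambda^*\Pi\lambda\le|\lambda|^2$, the pair $(P,\Lambda)$ solves a BSDE whose driver dominates the driver of $P_1$ evaluated along $(P,\Lambda)$. The driver of $P_1$ is not Lipschitz in $p$. We therefore apply the comparison to the linear equations for $1/P_1$ and $1/P$: Itô's formula gives $1/P$ a driver equal to the linear one in \eqref{linear-P1} plus the term $\frac{1}{P^3}\Lambda^*(I-\Pi)\Lambda\ge0$. Linear comparison then yields $1/P\le 1/P_1$.

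Next we induct. Assume $P_k\le P$ and that $P_k$ is bounded away from $0$. Then $(P,\Lambda)$ solves a BSDE with driver $f_k(P,\Lambda)+(\frac1{P_k}-\frac1P)\Lambda^*\Pi\Lambda\ge f_k(P,\Lambda)$. Comparison in Theorem~\ref{BSDE-quadratic} for the quadratic driver $f_k$ then gives $P_{k+1}\le P$. The same argument, using $\frac1{P_{k}}\ge\frac1{P_{k+1}}$, gives $P_{k+1}\le P_{k+2}$. For the base step $P_1\le P_2$ we compare $f_1$ with $f_0$, noting that $\frac1{P_1}\lambda^*\Pi\lambda\le\frac1{P_1}|\lambda|^2$. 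The lower bound $P_k\ge P_1\ge c^{-1}$ keeps every coefficient $1/P_k$ bounded, so Theorem~\ref{BSDE-quadratic} applies at each step.

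\emph{Upper bounds.} The $K$-system is handled symmetrically, with $I_n-\Pi$ in place of $\Pi$. This gives $K_1\le K_2\le\cdots\le K=1/P$. Since $K_1\ge c^{-1}$, all the $K_k$ are positive, and inverting the chain yields the right half of \eqref{algo-one}.

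\emph{Convergence.} Monotonicity and boundedness give a pointwise limit $\bar P=\lim P_k\le P$, and the convergence $P_k\to\bar P$ holds in $L^2$ by dominated convergence. Next we need the $\Lambda_k$ to be uniformly bounded in $L^2$, and in fact in BMO. This follows from the standard a priori estimate for quadratic BSDEs with uniformly bounded solutions: apply Itô's formula to $e^{\beta P_{k+1}}$, where the quadratic coefficient $1/P_k\le c$ is uniform in $k$.

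To show that $(\Lambda_k)$ is Cauchy, apply Itô's formula to $\phi(P_{k+1}-P_{j+1})$ with an exponential-type test function. Alternatively, and more cleanly, we invoke the stability theorem of Kobylanski \cite{kobylanski2000backward}: for monotone sequences of drivers with uniform quadratic bounds, $\Lambda_k\to\bar\Lambda$ strongly in $L^2$, and $(\bar P,\bar\Lambda)$ solves the limiting equation, in which $1/P_k\to1/\bar P$. Hence $(\bar P,\bar\Lambda)$ solves \eqref{sre}, and uniqueness gives $\bar P=P$ and $\bar\Lambda=\Lambda$.

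The same argument applied to the $K$-system gives $K_k\to 1/P$ and $\Xi_k\to-\Lambda/P^2$ in $L^2$. Multiplying by the bounded processes $P$ and $P^2$ then gives the second limit in the statement.

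\textbf{Main obstacle.} The hardest step is the strong $L^2$ convergence of $\Lambda_k$. The quadratic term prevents a direct Lipschitz stability estimate, so the plan relies on Kobylanski's monotone stability result. If that result does not apply directly, we would prove it by hand: apply Itô's formula to $e^{\beta(P-P_k)}-1$ and use the uniform BMO bounds to absorb the cross term $(\frac1{P_{k-1}}-\frac1P)\Lambda^*\Pi\Lambda$, which tends to zero by dominated convergence.
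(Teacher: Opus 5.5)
Your proof is correct. Its skeleton matches the paper's: comparison for monotonicity, then a monotone limit with stability for quadratic BSDEs, then the symmetric argument for $K_k$, then multiplication by the bounded factors $P$ and $P^2$.

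The difference is in the steps the paper leaves implicit. The paper proves only $P_k\le P_{k+1}$ and takes uniform bounds from an a priori estimate. It then calls the monotone limit ``$P$'', citing Proposition~5.4 of \cite{lim2004quadratic} and an energy estimate from It\^o's formula on $|P_k-P|^2$. It never checks that each iterate lies below the SRE solution, or that the limit solves \eqref{sre}.

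Your argument supplies these steps:
\begin{itemize}
\item You compare every iterate directly with $(P,\Lambda)$. Since $(\frac{1}{P_k}-\frac{1}{P})\Lambda^*\Pi\Lambda\ge 0$, $P$ is a supersolution for the frozen-coefficient driver $f_k$.
\item You get the base case $P_1\le P$ from the linear equations for $1/P_1$ and $1/P$. This avoids the fact that the $1/p$ drivers in \eqref{sre1} and \eqref{sre} fail the $y$-Lipschitz hypothesis of Theorem~\ref{BSDE-quadratic}.
\item The sandwich $P_1\le P_k\le P$ then gives the uniform $L^\infty$ bounds and a $k$-uniform quadratic constant directly.
\item You identify the limit via Kobylanski's monotone stability theorem \cite{kobylanski2000backward} together with uniqueness in Theorem~\ref{sre-positive}, rather than presupposing it.
\end{itemize}
Your route is therefore the more self-contained one. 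The paper's is shorter only because it outsources these points to Lim.

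One caution about your fallback in the ``main obstacle'' paragraph. The cross term $(\frac{1}{P_{k-1}}-\frac{1}{P})\Lambda^*\Pi\Lambda$ tends to zero only if you already know $P_k\to P$. A priori you only have $\frac{1}{P_{k-1}}-\frac{1}{P}\to \frac{1}{\bar P}-\frac{1}{P}\ge 0$, so as written that step is circular. If you prove the $L^2$ convergence of $\Lambda_k$ by hand, do it in this order:
\begin{enumerate}
\item Run the It\^o estimate as a Cauchy argument on $P_k-P_j$, or against the monotone limit $\bar P$.
\item Pass to the limit to show that $(\bar P,\bar\Lambda)$ solves \eqref{sre}.
\item Invoke uniqueness to conclude $\bar P=P$.
\end{enumerate}
Your main line, via Kobylanski's theorem, already follows this order and is fine.
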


\begin{proof}
    We first establish the monotonicity of $\{P_k\}_{k \ge 1}$ by induction. For $k=1$, noting that $\Pi \geq 0$ and $I_{n} - \Pi \geq 0$, the comparison principle (Theorem \ref{BSDE-quadratic}) is applied to $P_1$ and $P_2$ by the relation between their generators. For the inductive step, we define the generator of \eqref{iteration_P} as $g(P_{k+1}, \Lambda_{k+1}; P_k)$. Since $g$ is non-decreasing with respect to the parameter $P_k$, the ordering $P_k \le P_{k+1}$ implies $P_{k+1} \le P_{k+2}$ via the comparison theorem. The uniform boundedness of $\{P_k\}$ in $L^\infty_{\mathbb{F}}(0,T;\mathbb{R})$ is then derived from standard a priori estimates for BSDEs with quadratic growth.

    Regarding convergence, since the sequence $\{P_k\}$ is monotonic and bounded, it converges point-wise $\dt \times \mathrm{d}\mathbb{P}$-a.s. to a limit process $P$. Following Proposition $5.4$ in Lim \cite{lim2004quadratic}, the dominated convergence theorem and the stability of BSDEs ensure that $P_k \to P$ strongly in $L^2_{\mathbb{F}}(0,T;\mathbb{R})$. The convergence of the martingale components $\Lambda_k \to \Lambda$ in $L^2_{\mathbb{F}}(0,T;\mathbb{R}^n)$ is a direct consequence of applying It\^{o}'s formula to $|P_k - P|^2$ and utilizing the energy inequalities as shown in Lemma $A.2$ in Lim \cite{lim2004quadratic}.

    A symmetric analysis applies to the reciprocal sequence $\{(K_k, \Xi_k)\}_{k \ge 1}$. Noting that the matrix $I_{n} - \Pi$ is positive semi-definite, the comparison principle ensures the monotonicity of $\{K_k\}$. By similar arguments, we have the strong convergence in $L^2_{\mathbb{F}}(0,T;\mathbb{R}^n)$:
\begin{equation*}
    \mathbb{E} \int_{0}^{T} \left\{ |K_k(t) - K(t)|^2 + |\Xi_k(t) - \Xi(t)|^2 \right\} \mathrm{d}t \to 0 \quad \text{as } k \to \infty,
\end{equation*}
where $(K, \Xi)$ is the solution to the SRE for the reciprocal process. Finally, by substituting the algebraic relationships $K = 1/P$ and $\Xi = -\Lambda/P^2$, and utilizing the uniform boundedness of $P(\cdot)$, we obtain the convergence in terms of the original variables:
\begin{equation*}
    \mathbb{E} \int_{0}^{T} \left\{ |P(t)K_k(t) - 1|^2 + |P^2(t)\Xi_k(t) + \Lambda(t)|^2 \right\} \mathrm{d}t \to 0 \quad \text{as } k \to \infty.
\end{equation*}
\end{proof}

By treating $P_k$ (or $K_k$) from the previous iteration as a known parameter in the generator of the $(k+1)$-th step, the problem is reduced to solving a sequence of quadratic BSDEs with simpler structures. In fact, computing \((P_1,\Lambda_1)\) and \((K_1,\Xi_1)\) can be reduced to computing the simpler linear BSDEs \((\tilde{P}_1,\tilde{\Lambda}_1)\) and \((\tilde{K}_1,\tilde{\Xi}_1)\), which makes it convenient for us to numerically estimate the upper and lower bounds of $P$. Moreover, in a deterministic market, the martingale components vanish, and these bounds become tight with $P_1(t) = P_k(t) = P(t) = 1/K_k(t) = 1/K_1(t)$, recovering the classical Riccati ODE solution.

\begin{remark}
    Actually, if the market is complete, thus $\sigma$ is invertible (rank $ \sigma = n$), it's easy to check BSDE \eqref{sre1} is the same as SRE \eqref{sre}, if $\sigma = 0$ (rank $\sigma=0$), we can find $(P,\Lambda)=(\tilde{K}_1,\tilde{\Xi}_1)$. That is to say, the two ends of the inequality column \eqref{algo-one} depict two extreme cases of the market.
\end{remark}

The monotone iterative structure presented in \eqref{iteration_P} and \eqref{iterative_K} provides a numerical method to calculate the solution of SRE \eqref{sre}, however, since the approximation through $P_k$ and $K_k$ is based on the monotonic convergence theorem rather than the contraction mapping, it is difficult to provide an estimate of the convergence speed for \eqref{algo-one}.

\subsection{Numerical Scheme Through Logarithmic Transformation}

Next we provide a more direct method, we apply the logarithmic transformation to SRE \eqref{sre}, and define
\begin{equation*}
    \check{P}(t):=\ln P(t), \qquad \check{\Lambda}(t):=\frac{\Lambda(t)}{P(t)},
\end{equation*}
using It\^o's formula to $\check{P}(t)$
\begin{equation} \label{log-transformation}
    \left\{\begin{aligned}
        \mathrm{d}\check{P} &= - \left[(2r-|\rho|^2) - 2 \langle \rho, \check{\Lambda} \rangle -  \check{\Lambda}^*\sigma^* (\sigma\sigma^*)^{-1} \sigma  \check{\Lambda} + \frac{1}{2} | \check{\Lambda}|^2 \right]\dt + \langle  \check{\Lambda},\dw\rangle, \quad t \in [0, T),  \\
        \check{P}(T) & = 0,
    \end{aligned} \right.
\end{equation}
hence, the original SRE \eqref{sre} is transformed into a quadratic BSDE \eqref{log-transformation}. Therefore, by Theorem \ref{BSDE-quadratic}, the transformed BSDE \eqref{log-transformation} admits a unique adapted solution \((\check{P},\check{\Lambda})\). Consequently,
\[
    P=e^{\check{P}}, \qquad \Lambda=P\check{\Lambda},
\]
gives the unique strictly positive solution to the original SRE \eqref{sre}.

Moreover, after the logarithmic transformation, the transformed BSDE \eqref{log-transformation} no longer contains singular coefficients near $0$. This reformulation is particularly advantageous for numerical solution, since it avoids potential instabilities caused by divisions by small values of \(P(t)\), and leads to a more stable quadratic BSDE structure for both theoretical analysis and numerical schemes.

\begin{remark}
    The ordering relation \eqref{algo-one} is specific to the scalar SRE considered here. Indeed, its proof relies essentially on the comparison principle for scalar quadratic BSDEs, which does not generally extend to multidimensional quadratic BSDEs. Similarly, the logarithmic transformation applied to \(P\) is valid in this scalar setting, but has no direct analogue for a genuinely multidimensional or matrix-valued SRE.
\end{remark}

\subsection{Comparison of the Two Schemes}

The two schemes introduced above play complementary roles in the deep BSDE implementation. In the deep BSDE method, the initial value of the BSDE is treated as a trainable parameter, and an inaccurate initialization may lead to slow convergence or poor training performance. For the iterative scheme, the first-step equations can be reduced to linear BSDEs \eqref{linear-P1} and \eqref{linear-K1}, whose initial values can be numerically evaluated from their explicit representations. These values therefore provide reliable initial estimates for the subsequent quadratic BSDEs.

In particular, their values at $t=0$ yield a reasonable interval for the unknown initial value $P(0)$, and hence for the transformed initial value. In fact, we can take
\begin{equation*}
    \log \frac{1}{2} \left( \frac{1}{\tilde{P}_1} + \tilde{K}_1 \right),
\end{equation*}
as the initial value of \eqref{log-transformation}. Once a sufficiently accurate initial estimate is available, however, directly solving the logarithmically transformed SRE \eqref{log-transformation} is generally more convenient and computationally efficient than repeatedly solving the iterative BSDEs. Therefore, in the numerical experiments below, we mainly employ the logarithmic-transformation approach within the deep BSDE framework, while using the iterative scheme to provide initial-value estimates and reference bounds.

\section{Model Specifications and Parameter Calibration}

\subsection{The Markovian Market Conditions}

We first impose a Markovian factor structure on the market coefficients, which will be used throughout the subsequent implementation. Specifically, we assume that the market randomness is driven by a Markovian factor process \((\bm V_s)_{s\in[0,T]}\) satisfying
\begin{equation} \label{factor-process}
    \mathrm{d}\bm V(s) = \bm F(\bm V(s))\,\mathrm{d}s + \bm G(\bm V(s))\,\mathrm{d}W(s), \qquad s\in[0,T],
\end{equation}
where
\[
    \bm F:\mathbb R^n\to\mathbb R^n, \qquad \bm G:\mathbb R^n\to\mathbb R^{n\times n},
\]
are deterministic functions. The market coefficients are assumed to admit the
following Markovian structure:
\begin{equation*}
    r(s,\omega)=r(s,\bm V(s)), \qquad
    \mu(s,\omega)=\mu(s,\bm V(s)), \qquad
    \sigma(s,\omega)=\sigma(s,\bm V(s)),
    \qquad s\in[0,T].
\end{equation*}
Here
\[
    r:[0,T]\times\mathbb R^n\to\mathbb R, \qquad
    \mu:[0,T]\times\mathbb R^n\to\mathbb R^m, \qquad
    \sigma:[0,T]\times\mathbb R^n\to\mathbb R^{m\times n},
\]
are bounded continuous functions. Moreover, \(\sigma(s,v)\sigma(s,v)^*\) is
uniformly elliptic on the relevant domain; namely, there exists a constant
\(\delta>0\) such that
\begin{equation*}
    z^*\sigma(s,v)\sigma(s,v)^*z
    \geq
    \delta |z|^2,
    \qquad
    \forall z\in\mathbb R^m,
\end{equation*}
for all \((s,v)\) in the domain under consideration.

\subsection{A Multi-factor Stochastic Volatility Model} \label{empirical}

In this subsection, we present the detailed specification of the proposed multifactor stochastic volatility model and describe how its parameters are calibrated using market data. Throughout this subsection, unless otherwise stated, all Brownian motions are assumed to be one-dimensional standard Brownian motions defined on a complete filtered probability space. All model coefficients are taken to be deterministic constants, and all Brownian motions are mutually independent unless explicitly stated otherwise.

Before introducing our multifactor stochastic volatility framework, we briefly review the classical one-factor Heston model \cite{heston1993closed}, which is one of the most widely used models in the option pricing literature. The stock price process satisfies
\begin{equation*}
    \mathrm{d}S = rS\,\mathrm{d}t + \sqrt{V}\,S\,\mathrm{d}z_1,
\end{equation*}
where the variance process follows a square-root diffusion
\begin{equation*}
    \mathrm{d}V = (a - bV)\,\mathrm{d}t + \sigma \sqrt{V}\,\mathrm{d}z_2.
\end{equation*}
The Brownian motions $z_1$ and $z_2$ are correlated with coefficient $\rho_0$. This mean-reverting square-root specification captures key empirical features such as stochastic volatility and the leverage effect. To better describe the term structure of volatility, Christoffersen et al.~\cite{christoffersen2009shape} proposed a two-factor extension:
\begin{equation*}
    \mathrm{d}S = rS\,\mathrm{d}t + \sqrt{V_1}S\,\mathrm{d}z_3 + \sqrt{V_2}S\,\mathrm{d}z_4,
\end{equation*}
where $V_1$ and $V_2$ follow independent CIR-type processes
\begin{equation*}
    \begin{aligned}
\mathrm{d}V_1 &= (a_1 - b_1 V_1)\,\mathrm{d}t + \sigma_1 \sqrt{V_1}\,\mathrm{d}z_5, \\
\mathrm{d}V_2 &= (a_2 - b_2 V_2)\,\mathrm{d}t + \sigma_2 \sqrt{V_2}\,\mathrm{d}z_6.
\end{aligned}
\end{equation*}
Leverage effects are incorporated by allowing $z_3$ and $z_5$ (resp. $z_4$ and $z_6$) to be correlated, while all other Brownian motions are independent.

Although this model captures richer volatility structures, it remains restricted to a single asset and does not explicitly model cross-asset dependence. Motivated by these limitations, we consider a financial market consisting of four risky assets and one risk-free asset, where both idiosyncratic and systematic sources of uncertainty are present. Define the variance factor vector
\begin{equation*}
    \mathbf{V}(t) = (V_0(t), V_1(t), V_2(t), V_3(t), V_4(t))^*,
\end{equation*}
where each component follows a square-root process
\begin{equation} \label{CIR}
    \mathrm{d}V_k = (\alpha_k - \beta_k V_k)\,\mathrm{d}t + \sigma_k \sqrt{V_k}\,\mathrm{d}Z_k, 
    \qquad k=0,1,2,3,4.
\end{equation}
Here $V_0$ represents the stochastic volatility of the overall market, while $V_k$ ($k=1,\cdots,4$) describe asset-specific volatility factors.

The market index $P_0$ evolves according to
\begin{equation*}
    \frac{\mathrm{d}P_0}{P_0} = \mu_0(\mathbf{V})\,\mathrm{d}t + \sqrt{V_0}\,\mathrm{d}B_{0,0},
\end{equation*}
where
\begin{equation*}
    \mathrm{d}B_{0,0} = \nu_0\,\mathrm{d}Z_0 + \sqrt{1-\nu_0^2}\,\mathrm{d}W_0.
\end{equation*}
Here $\nu_0$ (and similarly $\nu_k$ in the sequel) is assumed to be a suitably specified linear function of $\mathbf{V}$, its explicit form will be provided later. This specification preserves the classical Heston-type structure at the aggregate market level. For each risky asset $P_k$, $k=1,\cdots,4$, we assume
\begin{equation*}
    \frac{\mathrm{d}P_k}{P_k}
    =
    \mu_k(\mathbf{V})\,\mathrm{d}t
    + \sqrt{V_k}\,\mathrm{d}B_k
    + \delta_{k,0}\sqrt{V_0}\,\mathrm{d}W_{k,0}
    + \gamma_{k,0}\sqrt{V_0}\,\mathrm{d}B_{k,0}.
\end{equation*}

The driving Brownian motions admit the decompositions
\begin{equation*}
    \mathrm{d}B_k = \nu_k\,\mathrm{d}Z_k + \sqrt{1-\nu_k^2}\,\mathrm{d}W_k,
\end{equation*}
\begin{equation*}
    \mathrm{d}W_{k,0} = \alpha_{k,0}\,\mathrm{d}W_0 + \sqrt{1-\alpha_{k,0}^2}\,\mathrm{d}Y_{k,0},
\end{equation*}
\begin{equation*}
    \mathrm{d}B_{k,0} = \rho_{k,0}\,\mathrm{d}Z_0 + \sqrt{1-\rho_{k,0}^2}\,\mathrm{d}Z_{k,0}.
\end{equation*}

The risk-free asset satisfies
\begin{equation*}
    \mathrm{d}S_0 = r S_0\,\mathrm{d}t.
\end{equation*}

Following Liu \cite{liu2007portfolio}, we specify the drift as
\begin{equation*}
    \mu_k(\mathbf{V}) = r + m_k V_k + n_k V_0.
\end{equation*}

The above dynamics admit a natural economic interpretation. The term $\sqrt{V_k}\,\mathrm{d}B_k$ represents idiosyncratic risk, driven by asset-specific stochastic volatility. The correlation between $B_k$ and $Z_k$ introduces a leverage effect at the individual asset level. 

The terms $\delta_{k,0}\sqrt{V_0}\,\mathrm{d}W_{k,0}$ and $\gamma_{k,0}\sqrt{V_0}\,\mathrm{d}B_{k,0}$ capture the exposure to systematic market risk. The former corresponds to market shocks that are orthogonal to volatility innovations, such as macroeconomic or liquidity shocks, while the latter reflects the impact of volatility shocks from the aggregate market, thereby introducing a systematic leverage effect. Through this structure, cross-asset dependence is generated endogenously via the common factor $V_0$ and shared Brownian drivers, leading to stochastic and state-dependent correlations.

Compared with the classical Heston model \cite{heston1993closed}, the proposed framework extends stochastic volatility to a multi-asset setting and incorporates both idiosyncratic and common volatility factors. Compared with multi-factor stochastic volatility models such as Christoffersen et al. \cite{christoffersen2009shape}, our model explicitly captures cross-asset interactions through a common market factor rather than modeling each asset independently. In contrast to traditional multi-asset diffusion models with constant covariance structures, the proposed model produces stochastic correlations driven by underlying variance factors, which is more consistent with empirical evidence.

Overall, the model provides a unified and tractable framework that simultaneously captures stochastic volatility, leverage effects, and dynamic cross-asset dependence. Moreover, following Liu \cite{liu2007portfolio}, by linking expected returns to both idiosyncratic and systematic volatility through the specification $\mu_k(\mathbf{V}) = r + m_k V_k + n_k V_0$, here the parameters $m_k$ and $n_k$ are to be estimated from empirical data using appropriate statistical methods, such as maximum likelihood estimation.

Notice that the Brownian motions $Y_{k,0}$ and $Z_{k,0}$ are independent. Then we obtain
\begin{equation*}
\begin{aligned}
    & \delta_{k,0}\sqrt{V_0}\,\mathrm{d}W_{k,0} + \gamma_{k,0}\sqrt{V_0}\,\mathrm{d}B_{k,0} \\
    =\;& \delta_{k,0}\sqrt{V_0}\left(\alpha_{k,0}\,\mathrm{d}W_0 + \sqrt{1-\alpha_{k,0}^2}\,\mathrm{d}Y_{k,0}\right) 
    + \gamma_{k,0}\sqrt{V_0}\left(\rho_{k,0}\,\mathrm{d}Z_0 + \sqrt{1-\rho_{k,0}^2}\,\mathrm{d}Z_{k,0}\right) \\
    =\;& \alpha_{k,0} \delta_{k,0} \sqrt{V_0}\,\mathrm{d}W_0 
    + \sqrt{V_0} \left( \gamma_{k,0} \rho_{k,0}\,\mathrm{d}Z_0 
    + \delta_{k,0}\sqrt{1-\alpha_{k,0}^2}\,\mathrm{d}Y_{k,0} 
    + \gamma_{k,0}\sqrt{1-\rho_{k,0}^2}\,\mathrm{d}Z_{k,0} \right) \\
    =\;& \alpha_{k,0} \delta_{k,0} \sqrt{V_0}\,\mathrm{d}W_0 
    + \sqrt{V_0} \left( \gamma_{k,0} \rho_{k,0}\,\mathrm{d}Z_0 
    + \sqrt{\delta_{k,0}^2(1-\alpha_{k,0}^2) + \gamma_{k,0}^2(1-\rho_{k,0}^2)}\,\mathrm{d}Z'_{k,0} \right),
\end{aligned}
\end{equation*}
where $Z'_{k,0}$ is a standard Brownian motion obtained via orthogonalization. 
Therefore, without loss of generality, we may set $\alpha_{k,0}=1$ by appropriately choosing the parameters $\delta_{k,0}$, $\gamma_{k,0}$, and $\rho_{k,0}$ in what follows.

Based on the above market model with one risk-free asset and four risky assets, we formulate the wealth dynamics of a self-financing portfolio in the standard form \eqref{dynamics} where $\pi(t) = (\pi_1(t),\pi_2(t),\pi_3(t),\pi_4(t))^* \in \mathbb{R}^4$ denotes the amount invested in the four risky assets. And we assume the excess return vector $\tilde{\mu}(t)$ is given by
\begin{equation*}
    \tilde{\mu}(t) 
    = \big(m_1 V_1(t) + n_1 V_0(t),\,\cdots,\,m_4 V_4(t) + n_4 V_0(t) \big)^*,
\end{equation*}
which captures both asset-specific and market-wide stochastic risk premia. To represent the diffusion term, we collect all independent Brownian motions into the vector
\begin{equation*}
    W(t) = \big(Z_1(t),\cdots,Z_4(t),\, W_1(t),\cdots,W_4(t),\, W_0(t),\, Z_{1,0}(t),\cdots,Z_{4,0}(t),\, Z_0(t)\big)^*.
\end{equation*}

The volatility matrix $\sigma(t) \in \mathbb{R}^{4 \times d}$ (with $d=14$) is constructed according to the decomposition of each asset return. Specifically, for each asset $k=1,2,3,4$, its return dynamics admit the representation
\begin{equation*}
\begin{aligned}
    \frac{\mathrm{d}S_k}{S_k}
    =&\ \mu_k(\mathbf{V})\,\mathrm{d}t 
    + \sqrt{V_k}\Big( \nu_k \mathrm{d}Z_k + \sqrt{1-\nu_k^2}\mathrm{d}W_k \Big) 
    + \delta_{k,0}\sqrt{V_0} \mathrm{d}W_0  \\
    &+ \gamma_{k,0}\sqrt{V_0}\Big( \rho_{k,0}\mathrm{d}Z_0 + \sqrt{1-\rho_{k,0}^2}\mathrm{d}Z_{k,0} \Big),
\end{aligned}
\end{equation*}
where the first term represents idiosyncratic volatility, while the latter two terms capture exposure to common market shocks. The drift term $\mu_k(\mathbf{V})$ is given by $m_k V_k (t)+ n_k V_0(t) + r$ according to the definition of excess return vector $\tilde{\mu}(t)$.

Accordingly, the $k$-th row of the volatility matrix $\sigma(t)$ is given by the coefficients associated with the corresponding Brownian motions:
\begin{equation*}
\begin{aligned}
    &\sigma_{k,k} = \sqrt{V_k}\nu_k, \qquad 
    \sigma_{k,k+4} = \sqrt{V_k}\sqrt{1-\nu_k^2}, \\
    &\sigma_{k,9} = \delta_{k,0}\sqrt{V_0}, \qquad
    \sigma_{k,9+k} = \gamma_{k,0}\sqrt{V_0}\sqrt{1-\rho_{k,0}^2}, \qquad
    \sigma_{k,14} = \gamma_{k,0}\sqrt{V_0}\rho_{k,0},
\end{aligned}
\end{equation*}
with all other entries being zero.

In this way, the matrix $\sigma(t)$ provides a complete characterization of both idiosyncratic and systematic sources of risk, and the wealth equation \eqref{dynamics} offers a tractable representation of portfolio dynamics under the proposed multifactor stochastic volatility framework.

The above construction is presented for four risky assets in order to match the
empirical implementation used in the numerical experiments. The same modeling
principle can be extended in parallel to a larger cross-section of assets. In
general, one may introduce one common market factor and asset-specific volatility factors for each risky asset, together with common and idiosyncratic Brownian drivers. The resulting volatility matrix is obtained by adding the corresponding rows and columns in the same way as in the four-asset specification. Hence the
framework is not intrinsically restricted to four assets; the current dimension is chosen for empirical tractability and numerical demonstration.

\begin{remark}
The CIR-type variance factors in \eqref{CIR} are not bounded in general, and therefore the induced drift and volatility coefficients need not be
bounded stochastic processes. A fully rigorous well-posedness theory for the
corresponding SREs under this unbounded empirical specification would require
additional mathematical arguments and is beyond the scope of the present paper. The objective of this research is instead to provide a realistic and tractable empirical specification for numerical portfolio construction. In the Monte Carlo implementation and in the historical backtests, the simulated or realized factor paths remain finite over the finite testing horizon, and the neural-network training is stable in practice. Thus, while the unbounded-factor specification requires further theoretical justification at the level of SRE well-posedness, it is suitable for the numerical and empirical analysis carried out in this work.
\end{remark}

\subsection{Historical Data and Simulation}

To calibrate the proposed multifactor stochastic volatility model, we construct
a daily market dataset consisting of the S\&P 500 Index and four sector SPDR
ETFs: Technology (XLK), Financials (XLF), Energy (XLE), and Health Care (XLV). These sectors provide a diversified representation of growth, cyclical, energy, and defensive components of the equity market.

We use the CBOE Volatility Index (VIX) as a proxy for the common market variance
factor. The VIX reflects the risk-neutral expectation of 30-day forward-looking volatility implied by S\&P 500 options, and therefore captures forward-looking market uncertainty and investor risk sentiment.

The in-sample calibration period runs from January 2015 to December 2019.
Figure~\ref{fig:empirical_data} reports the normalized cumulative price
trajectories of the four sector ETFs and the S\&P 500 Index, together with the
evolution of the VIX over the same period.

\begin{figure}[htbp]
    \centering
    \includegraphics[width=0.8\linewidth]{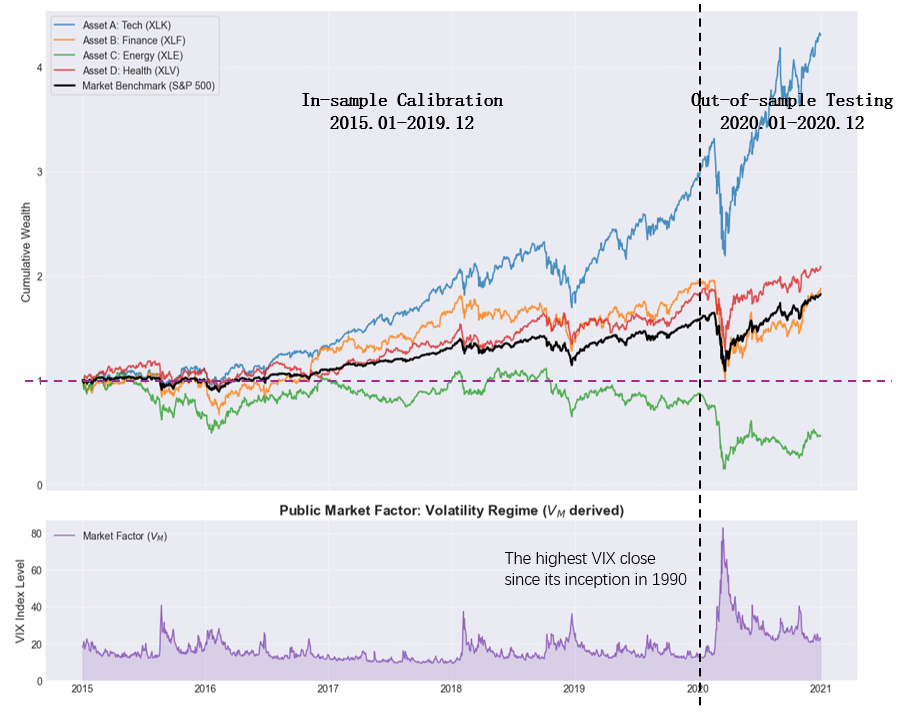}
    \caption{Normalized cumulative prices of the ETFs and the S\&P 500 Index,
    together with the VIX.}
    \label{fig:empirical_data}
\end{figure}

After calibration, we simulate the proposed multifactor model by Monte Carlo. The CIR variance factors are generated using the exact transition law of the square-root process. Specifically, the transition distribution is sampled from a
noncentral chi-square distribution, implemented through a Poisson--Gamma mixture scheme. This avoids the discretization bias of a naive Euler scheme and preserves the one-step distributional moments of the CIR factors.

To verify the consistency of the simulation, we compare the sample statistics of the simulated paths with the corresponding empirical calibration targets. Table~\ref{tab:verification} reports the verification results based on two sets of moments: annualized mean returns for the market index and individual assets, and second-order statistics including volatilities and the cross-asset correlation matrix.

\begin{table}[htbp]
    \centering
    \caption{Monte Carlo simulation verification. The table compares historical
    statistics from January 2015 to December 2019 with ensemble averages from
    \(1{,}000\) simulated scenarios.}
    \label{tab:verification}
    \includegraphics[width=1.0\linewidth]{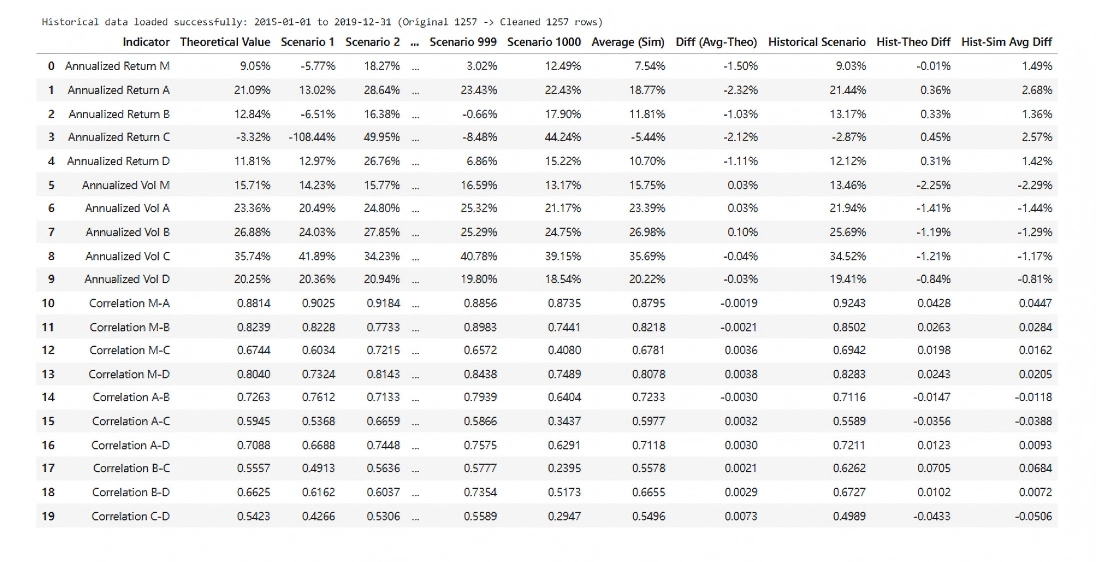}
\end{table}

\section{Numerical Results of SRE}

\subsection{Deep BSDE Training Framework}

With the model specifications and historically calibrated parameters in place, we now turn to the numerical experiments, focusing on both the training dynamics and the final convergence performance.

For the linear BSDEs \eqref{linear-P1}, \eqref{linear-K1} and the logarithmically transformed SRE \eqref{log-transformation}, we first employ the deep BSDE framework of Han et al. \cite{han2018solving}, in which the initial value of the backward process is treated as a trainable parameter and the martingale component is approximated by neural networks. As discussed above, the bounds obtained from the iterative scheme provide useful initial estimates for this trainable parameter. For the logarithmically transformed SRE \eqref{log-transformation}, we additionally implement the DBDP2 algorithm proposed by Huré et al. \cite{hure2020deep}. In the following experiments, we show these methods in terms of training stability, computational efficiency, and approximation accuracy.

\subsection{Numerical Results for the Linear BSDEs}

We first evaluate the transformed linear BSDEs \eqref{linear-P1} and \eqref{linear-K1}, which provide lower and
upper bound estimates for the stochastic Riccati equation \eqref{sre}. The numerical results at \(t=0\) are reported in Table~\ref {tab:linear_initial_values}. The values of \(\widetilde P_1\) and \(\widetilde K_1\) are obtained by solving the
corresponding linear BSDEs using the Deep BSDE method. The columns ``DeepBSDE'' and ``DBDP'' report the initial values of the logarithmic SRE \eqref{log-transformation} computed by the two solvers.

\begin{table}[htbp]
    \centering
    \caption{Initial-value comparison for the linear BSDE bounds and nonlinear SRE solvers.}
    \label{tab:linear_initial_values}
    \renewcommand{\arraystretch}{1.15}
    \begin{tabular}{cccc}
        \toprule
        \(P_1\) \((\widetilde P_1)\) & DeepBSDE & DBDP2 & \(\widetilde K_1\) \\
        \midrule
        \(0.259213\) \((3.857834)\) & \(0.280040\) & \(0.279717\) & \(0.282225\) \\
        \bottomrule
    \end{tabular}
\end{table}

The linear BSDE estimates provide a useful numerical reference for the nonlinear logarithmic SRE solvers. In particular, the learned initial values from DeepBSDE and DBDP are close to each other and lie within the range indicated by the linear comparison bounds. Detailed training histories and out-of-sample path distributions for the linear BSDEs are provided in
Appendix~\ref{app:linear_bsde_figures}.

To further assess the accuracy of the linear BSDE approximation, we conduct an
out-of-sample evaluation over \(50{,}000\) independent Monte Carlo paths.
Table~\ref{tab:oos_statistics} reports the terminal statistics for
\(\widetilde K_1\) and \(\widetilde P_1\). Both terminal means are close to the target value \(1.0\), with small terminal MSEs and moderate tail dispersion, indicating that the learned solutions satisfy the terminal condition accurately under independent testing.

\begin{table}[htbp]
    \centering
    \caption{Out-of-sample terminal statistics for the linear BSDEs.}
    \label{tab:oos_statistics}
    \renewcommand{\arraystretch}{1.15}
    \begin{tabular}{ccc}
        \toprule
        \textbf{Metric} & \(\widetilde K_1\) & \(\widetilde P_1\) \\
        \midrule
        Terminal mean target \(1.0\) & \(0.999878\) & \(0.999317\) \\
        Terminal standard deviation (Std) & \(0.020500\) & \(0.019084\) \\
        Terminal mean squared error (MSE) & \(4.20\times 10^{-4}\) & \(3.65\times 10^{-4}\) \\
        \(99\%\) percentile & \(1.0546\) & \(1.0361\) \\
        \(1\%\) percentile & \(0.9571\) & \(0.9389\) \\
        \bottomrule
    \end{tabular}
\end{table}

\subsection{Numerical Results for the Logarithmic SRE}

Although the original SRE \eqref{sre} can also be approximated directly, its training is slower and less stable in our experiments, mainly due to the positivity
constraint and the reciprocal terms in the Riccati structure. Therefore, we
focus on the logarithmically transformed SRE \eqref{log-transformation} in the reported numerical results.
The corresponding out-of-sample path-distribution diagnostics are deferred to Appendix~\ref{app:log_bsde_figures}.

Table~\ref{tab:oos_statistics_comparison} reports the out-of-sample terminal
statistics of the transformed SRE \eqref{sre}. The log-space columns evaluate the terminal error relative to the target \(\check P_T=0\), while the SRE columns report the corresponding quantities after mapping back to the original variable.

\begin{table}[htbp]
    \centering
    \caption{Out-of-sample terminal statistics for the logarithmic BSDE.}
    \label{tab:oos_statistics_comparison}
    \renewcommand{\arraystretch}{1.15}
    \begin{tabular}{ccccc}
        \toprule
        & \multicolumn{2}{c}{\textbf{Deep BSDE}} 
        & \multicolumn{2}{c}{\textbf{DBDP2}} \\
        \cmidrule(lr){2-3} \cmidrule(lr){4-5}
        \textbf{Metric} 
        & \textbf{Log-space} & \textbf{SRE} 
        & \textbf{Log-space} & \textbf{SRE} \\
        \midrule
        Initial value 
        & \(\log P(0)=-1.27282\) & \(P(0)=0.28004\)
        & \(\log P(0)=-1.27397\) & \(P(0)=0.27971\) \\
        Terminal mean 
        & \(-0.000008\) & \(0.996308\)
        & \(0.000314\) & \(0.997090\) \\
        Std 
        & \(0.009884\) & \(0.010325\)
        & \(0.032782\) & \(0.033544\) \\
        Terminal MSE 
        & \(9.77\times10^{-5}\) & \(1.20\times10^{-4}\)
        & \(1.07\times10^{-3}\) & \(1.16\times10^{-3}\) \\
        \(99\%\) percentile 
        & \(0.0250\) & \(1.0224\)
        & \(0.1063\) & \(1.1067\) \\
        \(1\%\) percentile 
        & \(-0.0225\) & \(0.9721\)
        & \(-0.0519\) & \(0.9442\) \\
        \bottomrule
    \end{tabular}
\end{table}

The terminal statistics show that both neural solvers are consistent with the
zero terminal condition in the logarithmic space. In the Deep BSDE implementation, we initialize the BSDE initial value using a relatively accurate estimate obtained from the linear comparison BSDEs. This favorable initialization helps the Deep BSDE training converge faster and achieve higher terminal accuracy in the present experiment. However, this observation should not be interpreted as
evidence that Deep BSDE is generally superior to DBDP2. The current example is
low-dimensional and is intended to validate the logarithmic formulation, rather than to provide a systematic benchmark comparison between the two algorithms.

\section{Market Investment Strategy}

We finally use the numerical solution of the SRE \eqref{sre} to compute the efficient frontier and investment strategy \eqref{strategy} of the mean--variance portfolio
problem as shown in Appendix \ref{solving-MV}.

\subsection{Numerical Efficient Frontier}

The numerical efficient frontier is plotted in the variance-mean plane as shown in Figure \ref{fig:efficient_frontier}, where the horizontal axis represents the terminal wealth variance $\mathcal{J}^*(d) = \operatorname{Var}[X(T)]$ and the vertical axis denotes the expected terminal wealth $d = \mathbb{E}[X(T)]$. The efficient frontier forms a parabolic curve, explicitly illustrating the quadratic trade-off between the expected return target and the portfolio risk. The global minimum variance portfolio is located at the vertical intercept $\mathcal{J}^*(d) = 0$, where the strategy entirely eliminates risk, and the expected terminal wealth precisely equals the deterministic risk-free compounding trajectory, given by $d = x_0 e^{rT}$.

\begin{figure}[htbp]
    \centering
    \includegraphics[width=0.6\textwidth]{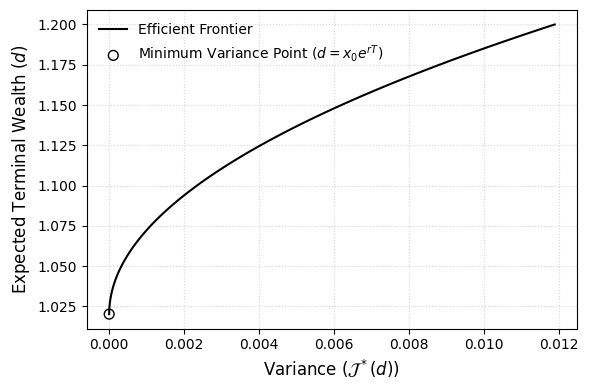}
    \caption{The mean-variance efficient frontier plotted in the variance-mean plane.}
    \label{fig:efficient_frontier}
\end{figure}

\subsection{Out-of-Sample Backtest Portfolio Performance}

We further evaluate the portfolio policies induced by the numerically computed SREs in an out-of-sample backtest investment experiment. The purpose of this experiment is to examine whether the stochastic control policy obtained from the mean--variance formulation can produce stable and valid wealth dynamics under realistic market fluctuations.

We compare the proposed mean--variance strategies with three standard empirical benchmarks and one simplified continuous-time baseline. The empirical benchmarks include the equal-weight strategy (EW), the inverse-variance strategy (IV), and the global minimum-variance strategy (GMV). The EW strategy allocates capital
uniformly across all risky assets. The IV strategy assigns weights inversely
proportional to the historical variances of individual assets, ignoring
cross-asset correlations. The GMV strategy uses the full historical covariance matrix \(\Sigma\) and is given by
\[
    w_{\rm GMV}
    =
    \frac{\Sigma^{-1}\mathbf 1}
    {\mathbf 1^* \Sigma^{-1}\mathbf 1}.
\]
In addition, we include a Black--Scholes baseline with constant drift and
volatility parameters calibrated from a rolling historical window. Under this
simplified model, the state-dependent stochastic-volatility structure is
removed, and the resulting strategy serves as a reference for assessing the
value of the multifactor formulation.

We conduct out-of-sample backtests over two testing periods, 2020 and 2025. The
2020 sample includes the pandemic-induced market stress period, while the 2025
sample is used to assess performance in a different market environment. In both
cases, the initial wealth is \(x_0=100\), the annualized target return is
\(6\%\), and the portfolio is rebalanced every five trading days. Based on the
daily net asset value (NAV) trajectory, we compute annualized return,
annualized volatility, Sharpe ratio, Sortino ratio, maximum drawdown (MDD),
recovery time (RT), Calmar ratio, and marginal expected shortfall at the
\(5\%\) level, \(\mathrm{MES}_{5\%}\).

We evaluate the resulting portfolio policies in two out-of-sample periods:
the stress market of 2020 and the more recent market environment of 2025.
The year 2020 provides a severe stress test because it contains the
pandemic-induced market crash, while the 2025 experiment is used to examine
whether the portfolio construction remains stable in a different market regime. In both cases, the initial wealth is set to \(x_0=100\), the annualized target return is fixed at \(6\%\), and the portfolio is rebalanced every five trading days.

Tables~\ref{tab:risk_metrics_2020} and~\ref{tab:risk_metrics_2025} report the
risk and performance metrics for the two testing periods.

\begin{table}[htbp]
    \centering
    \caption{Out-of-sample risk and performance metrics for the year 2020.}
    \label{tab:risk_metrics_2020}
    \renewcommand{\arraystretch}{1.15}
    \small
    \resizebox{\textwidth}{!}{
    \begin{tabular}{lrrrrrrrr}
        \toprule
        \textbf{Strategy} 
        & \textbf{Return} 
        & \textbf{Vol} 
        & \textbf{Sharpe} 
        & \textbf{Sortino} 
        & \textbf{Calmar} 
        & \textbf{MDD} 
        & \textbf{RT} 
        & \(\boldsymbol{\mathrm{MES}_{5\%}}\) \\
        \midrule
        MV Deep-BSDE & 5.36\% & 12.40\% & 0.43 & 0.68 & 0.56 & -9.58\% & 83 & -0.39\% \\
        MV DBDP & 6.01\% & 11.43\% & 0.53 & 0.86 & 0.76 & -7.93\% & 52 & -0.39\% \\
        BS Baseline    & 6.87\% & 32.19\% & 0.21 & 0.30 & 0.24 & -28.34\% & 61 & -4.48\% \\
        EW             & 2.99\% & 58.01\% & 0.05 & 0.07 & 0.06 & -52.62\% & 222 & -9.51\% \\
        IV             & 6.95\% & 42.92\% & 0.16 & 0.22 & 0.16 & -42.49\% & 219 & -6.90\% \\
        GMV            & 18.44\% & 18.23\% & 1.01 & 1.44 & 2.16 & -8.55\% & 79 & -1.51\% \\
        \bottomrule
    \end{tabular}
    }
\end{table}

\begin{table}[htbp]
    \centering
    \caption{Out-of-sample risk and performance metrics for the year 2025.}
    \label{tab:risk_metrics_2025}
    \renewcommand{\arraystretch}{1.15}
    \small
    \resizebox{\textwidth}{!}{
    \begin{tabular}{lrrrrrrrr}
        \toprule
        \textbf{Strategy} 
        & \textbf{Return} 
        & \textbf{Vol} 
        & \textbf{Sharpe} 
        & \textbf{Sortino} 
        & \textbf{Calmar} 
        & \textbf{MDD} 
        & \textbf{RT} 
        & \(\boldsymbol{\mathrm{MES}_{5\%}}\) \\
        \midrule
        MV Deep BSDE & \(7.53\%\)  & \(10.40\%\) & \(0.72\) & \(1.19\) & \(0.77\) & \(-9.73\%\)  & \(75\)  & \(-0.92\%\) \\
        MV DBDP      & \(7.42\%\)  & \(11.64\%\) & \(0.64\) & \(1.05\) & \(0.72\) & \(-10.24\%\) & \(69\)  & \(-0.99\%\) \\
        BS Baseline  & \(7.07\%\)  & \(10.16\%\) & \(0.70\) & \(1.27\) & \(1.05\) & \(-6.71\%\)  & \(34\)  & \(-0.97\%\) \\
        EW           & \(16.22\%\) & \(18.14\%\) & \(0.89\) & \(1.24\) & \(0.97\) & \(-16.80\%\) & \(92\)  & \(-2.72\%\) \\
        IV           & \(14.21\%\) & \(16.70\%\) & \(0.85\) & \(1.18\) & \(0.92\) & \(-15.47\%\) & \(153\) & \(-2.43\%\) \\
        GMV          & \(3.51\%\)  & \(12.57\%\) & \(0.28\) & \(0.37\) & \(0.22\) & \(-16.16\%\) & \(226\) & \(-1.60\%\) \\
        \bottomrule
    \end{tabular}
    }
\end{table}

Figures~\ref{fig:strategy_comparison_2020} and~\ref{fig:strategy_comparison_2025}
plot the corresponding cumulative wealth trajectories.

\begin{figure}[htbp]
    \centering
    \includegraphics[width=0.9\linewidth]{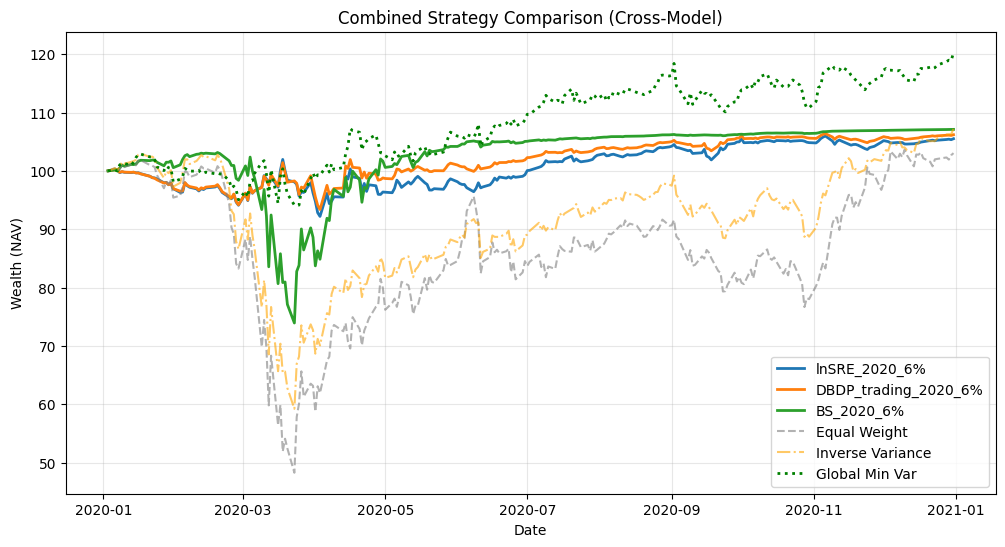}
    \caption{Cumulative wealth trajectories of all strategies in 2020.}
    \label{fig:strategy_comparison_2020}
\end{figure}

\begin{figure}[htbp]
    \centering
    \includegraphics[width=0.9\linewidth]{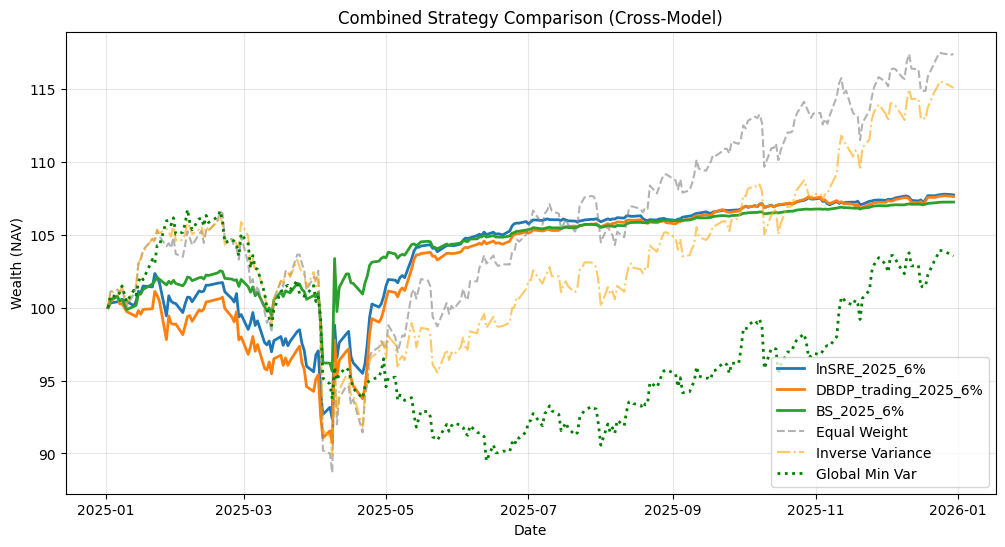}
    \caption{Cumulative wealth trajectories of all strategies in 2025.}
    \label{fig:strategy_comparison_2025}
\end{figure}

The two testing periods highlight complementary aspects of the proposed
mean--variance strategy. In the 2020 stress period, the MV strategies generated
by the SRE solvers achieve realized returns close to the prescribed target while substantially reducing volatility, maximum drawdown, and tail exposure relative to the EW, IV, and Black--Scholes baselines. This is also visible from the wealth curves, where the MV strategies exhibit much smoother dynamics during the market crash. Although the GMV benchmark attains the highest realized return and Sharpe ratio in 2020, but its performance in 2025 was the worst. Compared with EW and IV, the MV strategies sacrifice some upside return in 2025, but they also achieve smoother wealth trajectories, lower volatility, smaller maximum drawdowns, and less negative \(\mathrm{MES}_{5\%}\). 

Compared with the Black--Scholes baseline, the advantage of the proposed
multifactor MV strategy is most evident in 2020. During this stress period, the
multifactor strategy delivers much lower volatility, smaller drawdown, and
milder \(\mathrm{MES}_{5\%}\), indicating stronger robustness under severe
market fluctuations. In 2025, the two mean--variance specifications perform
more similarly, with comparable volatility, Sharpe ratio, and tail-risk
measures. This suggests that the state-dependent multifactor structure is
particularly valuable in stressed markets, while remaining competitive with the simpler constant-coefficient Black--Scholes baseline in a more regular market environment. This comparison is economically relevant because both are mean--variance policies, while the Black--Scholes baseline uses constant coefficients and the proposed model uses state-dependent multifactor dynamics.

Overall, the combined evidence from 2020 and 2025 supports the numerical
usefulness of the reflected multifactor stochastic control framework. The main
advantage is not that the proposed MV strategy dominates every benchmark in
every metric, but that it produces stable target-return portfolio dynamics with
strong drawdown control across different market regimes. The wealth curves and
risk statistics together indicate that the state-dependent multifactor
formulation provides a more adaptive and robust portfolio construction than the
constant-coefficient Black--Scholes baseline, especially when market conditions change substantially.


\section{Concluding Remarks}

This paper investigates a computable and empirically implementable framework for
continuous-time mean--variance optimal portfolio selection with random market
coefficients. The main advantage of the proposed model is that it captures state-dependent risk premia, stochastic volatility, and dynamic cross-asset dependence through a tractable multifactor structure. Numerically, we first use two linear BSDEs to obtain a reliable estimate of the initial value, and then solve the logarithmically transformed SRE using both the Deep BSDE and DBDP methods. Empirically, the proposed multifactor mean--variance strategy demonstrates its
main advantage in producing smoother target-return wealth dynamics with stronger drawdown control and more robust downside-risk protection than the benchmark strategies.

Future work may extend the framework to optimal investment problems under other
utility functions, such as CARA and CRRA preferences. Another direction is to
apply the BSDE-based numerical method to constrained stochastic
control problems, including portfolio, leverage, and trading constraints.

\section*{Supplementary material}

These supplementary materials include the code for the multifactor stochastic volatility model, the implementation of the Deep BSDE and DBDP2
solvers, additional portfolio-allocation figures, and further financial
interpretations of the empirical results.

The detailed parameter values, the historical dataset, BSDE solver implementation, and additional numerical outputs are available on the accompanying open-source project webpage:
\begin{equation*}
\text{\url{https://doi.org/10.5281/zenodo.21838225}}.
\end{equation*}

\section*{Acknowledgments}

The authors would like to thank Weiran Xiong for helpful discussions on the use of deep learning methods for solving backward stochastic differential equations (BSDEs).

\appendix
\setcounter{theorem}{0}
\renewcommand{\thetheorem}{\thesection.\arabic{theorem}}
\setcounter{lemma}{0}
\renewcommand{\thelemma}{\thesection.\arabic{lemma}}
\setcounter{assumption}{0}
\renewcommand{\theassumption}{\thesection.\arabic{assumption}}

\section{Solving the MV Portfolio Problem \ref{prob1}} \label{solving-MV}

In this appendix, we will briefly provide the solution of the classic MV problem in a incomplete market under pre-commitment strategy. Since {\bf Problem \ref{prob1}} is a convex optimization problem with a linear equality constraint, we apply the Lagrange multiplier method. By introducing a multiplier $\lambda \in \mathbb{R}$ to handle the terminal expectation constraint $\mathbb{E}[X(T)] = d$, the constrained {\bf Problem \ref{prob1}} is transformed into an unconstrained stochastic control problem. Specifically, for each fixed $\lambda \in \mathbb{R}$, we consider:
\begin{equation} \label{prob2}
    \begin{array}{rl}
        \min\limits_{\pi(\cdot) \in \mathcal{A}} & J(\pi(\cdot), d, \lambda) \triangleq \mathbb{E}\left[|X(T) - d|^2\right] + 2\lambda(\mathbb{E}[X(T)] - d), \\
        \text{subject to} & (X(\cdot), \pi(\cdot)) \text{ satisfies } \eqref{dynamics}.
    \end{array}
\end{equation}

By completing the square, it is straightforward to see that minimizing $J(\pi(\cdot), d, \lambda)$ is equivalent to solving the following standard stochastic linear-quadratic (LQ) optimal control problem:
\begin{equation} \label{prob3}
    \begin{array}{rl}
        \min\limits_{\pi(\cdot) \in \mathcal{A}} & \mathscr{J}(\pi(\cdot), b) \triangleq \mathbb{E}\left[|X(T) - b|^2\right], \\
        \text{subject to} & \left\{
        \begin{array}{l}
            b = d - \lambda, \\
            (X(\cdot), \pi(\cdot)) \text{ satisfies } \eqref{dynamics}.
        \end{array}
        \right.
    \end{array}
\end{equation}

Before addressing the constrained {\bf Problem \ref{prob1}}, we introduce a feasibility condition to guarantee that the set of admissible strategies satisfying the terminal expectation constraint $\mathbb{E}[X(T)] = d$ is non-empty.

\begin{lemma} \label{feasible-condition}
The constrained stochastic LQ {\bf Problem \ref{prob1}} is feasible for every $d \in \mathbb{R}$ if and only if
\begin{equation} \label{posi-definite}
    \mathbb{E} \left[ \int_{0}^{T} | \tilde{\mu}(t) y(t) + \sigma(t)z(t) |^2 \dt \right] > 0,
\end{equation}
where the pair $(y(\cdot), z(\cdot)) \in L_{\mathbb{F}}^{\infty}(0, T ; \mathbb{R}) \times L_{\mathbb{F}}^{2}(0, T ; \mathbb{R}^{n})$ is the unique solution to the following linear backward stochastic differential equation (BSDE):
\begin{equation} \label{auxiliary}
    \left\{\begin{aligned}
        \mathrm{d}y(t) &= -r(t)y(t) \dt + \langle z(t), \dw(t) \rangle, \qquad t \in [0, T), \\
        y(T) &= 1.
    \end{aligned}\right.
\end{equation}
\end{lemma}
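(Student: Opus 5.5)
The plan is the standard duality argument of Lim and Zhou: pair the wealth process $X$ with the adjoint pair $(y,z)$ from \eqref{auxiliary}, so that $\mathbb{E}[X(T)]$ becomes an affine functional of the control $\pi$. Both directions of the equivalence then follow by inspecting this functional.

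\emph{Step 1: well-posedness of \eqref{auxiliary}.} The BSDE \eqref{auxiliary} is linear with bounded coefficient $r$ and terminal value $1$. Its unique solution is
\[
y(t)=\mathbb{E}\Big[e^{\int_t^T r(s)\,\mathrm{d}s}\,\Big|\,\mathcal{F}_t\Big],
\]
with $z\in L^2_{\mathbb{F}}(0,T;\mathbb{R}^n)$ given by the martingale representation theorem. From this formula $y\in L^\infty_{\mathbb{F}}(0,T;\mathbb{R})$, and $y(t)\ge c^{-1}>0$ because $r$ is bounded.

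\emph{Step 2: the duality identity.} For any admissible pair $(X,\pi)$, apply It\^o's product rule to $X y$ using \eqref{dynamics} and \eqref{auxiliary}. The terms $r X y$ cancel, and one obtains
\[
\mathrm{d}(Xy)=\big[\,y\langle \tilde\mu,\pi\rangle+\langle \sigma^*\pi,z\rangle\,\big]\mathrm{d}t+\mathrm{d}M,
\]
where
\[
M=\int\langle yσ^*\pi+Xz,\mathrm{d}W\rangle
\]
is a local martingale. To take expectations I must show $M$ is a true martingale. For this, $X\in L^{2,c}_{\mathbb{F}}$ by standard SDE estimates, $y$ is bounded, and $\pi,z\in L^2$. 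The Burkholder--Davis--Gundy inequality gives
\[
\mathbb{E}\Big[\big(\int_0^T |X|^2|z|^2\,\mathrm{d}t\big)^{1/2}\Big]\le \mathbb{E}\Big[\sup_t|X|\,\big(\int_0^T|z|^2\,\mathrm{d}t\big)^{1/2}\Big]<\infty
\]
by Cauchy--Schwarz, so $M$ is a uniformly integrable martingale. Since $y(T)=1$, this yields
\[
\mathbb{E}[X(T)]=x_0y(0)+\mathbb{E}\int_0^T\langle \pi(t),\,\tilde\mu(t)y(t)+\sigma(t)z(t)\rangle\,\mathrm{d}t .
\]
This integrability check is the only genuinely technical point. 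It is where I expect care is needed, though it is routine given the boundedness of $y$.

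\emph{Step 3: sufficiency.} Suppose \eqref{posi-definite} holds, and write $c:=\mathbb{E}\int_0^T|\tilde\mu y+\sigma z|^2\,\mathrm{d}t>0$. For $\beta\in\mathbb{R}$ take $\pi_\beta:=\beta(\tilde\mu y+\sigma z)$. This lies in $L^2_{\mathbb{F}}(0,T;\mathbb{R}^m)$ because $\tilde\mu$, $y$, $\sigma$ are bounded and $z\in L^2$, so $\pi_\beta\in\mathcal{A}$. Step 2 gives $\mathbb{E}[X(T)]=x_0y(0)+\beta c$. Choosing $\beta=(d-x_0y(0))/c$ meets the constraint for any $d\in\mathbb{R}$, so Problem \ref{prob1} is feasible for every $d$.

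\emph{Step 4: necessity.} Conversely, suppose the expectation in \eqref{posi-definite} equals $0$. Then $\tilde\mu y+\sigma z=0$ $\mathrm{d}t\times\mathrm{d}\mathbb{P}$-a.e. By Step 2, $\mathbb{E}[X(T)]=x_0y(0)$ for every $\pi\in\mathcal{A}$. Hence the constraint $\mathbb{E}[X(T)]=d$ is infeasible for every $d\neq x_0y(0)$, which contradicts feasibility for all $d$. This proves the equivalence.
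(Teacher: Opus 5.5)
Your proposal is correct and is essentially the argument the paper relies on: the paper omits the proof and defers to Proposition 3.4 of Lim and Zhou, which is exactly this duality computation $\mathbb{E}[X(T)]=x_0y(0)+\mathbb{E}\int_0^T\langle \pi,\tilde\mu y+\sigma z\rangle\,\mathrm{d}t$. That argument then scales $\pi$ along $\tilde\mu y+\sigma z$ for sufficiency and notes $\mathbb{E}[X(T)]\equiv x_0y(0)$ for necessity, just as you do, and your integrability check and your representation $y(t)=\mathbb{E}[e^{\int_t^T r\,\mathrm{d}s}\mid\mathcal{F}_t]$ are both right (the paper's later deterministic-$r$ display $y(t)=\exp\{-\int_t^T r\,\mathrm{d}s\}$ has a sign slip, and yours has the correct sign).
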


The proof of Lemma \ref{feasible-condition} relies on arguments similar to those in, e.g., Proposition 3.4 of \cite{lim2002mean}, and is therefore omitted. We remark that condition \eqref{posi-definite} is quite mild, even when all market parameters are random. Consequently, we assume throughout this paper that this feasibility condition holds.

Since {\bf Problems \ref{prob1}}, {\bf \ref{prob2}} and {\bf \ref{prob3}} are strictly convex, an optimal strategy $\pi^*(\cdot)$ exists. We denote their corresponding optimal values by $\mathcal{J}^*(d)$, $J^*(d, \lambda)$, and $\mathscr{J}^*(b)$, respectively.

To solve the unconstrained stochastic LQ {\bf Problem \ref{prob3}}, we introduce the following linear BSDE that features unbounded random coefficients:
\begin{equation} \label{BSDE}
    \left\{\begin{aligned}
        \mathrm{d}H &= \left[r H + \left\langle \rho - \frac{1}{P}  (I_{n \times n} - \sigma^* (\sigma\sigma^*)^{-1} \sigma)  \Lambda, \Gamma \right\rangle\right] \dt + \langle \Gamma,\dw\rangle, \qquad t \in [0, T), \\
         H(T) & = 1.
    \end{aligned} \right.
\end{equation}

There is an extensive body of academic literature studying SRE \eqref{sre} and BSDE \eqref{BSDE}. The well-posedness of these equations is given in the following theorem:

\begin{theorem} \label{sre-positive}
    The SRE \eqref{sre} admits a unique adapted solution
    \begin{equation*}
        (P(\cdot), \Lambda(\cdot)) \in L_{\mathbb{F}}^{\infty,c}(0, T; \mathbb{R}_{\gg 1}) \times L_{\mathbb{F}}^{2}(0, T; \mathbb{R}^n),
    \end{equation*}
    and the linear BSDE \eqref{BSDE} admits a unique solution $(H(\cdot), \Gamma(\cdot))$ such that
        \begin{equation*}
            (H(\cdot), \Gamma(\cdot)) \in L_{\mathbb{F}}^{2,c}\left(0, T ; \mathbb{R}\right) \times L_{\mathbb{F}}^{2}\left(0, T ; \mathbb{R}^n\right).
        \end{equation*}
\end{theorem}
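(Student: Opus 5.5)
We prove the two assertions in turn. For the SRE \eqref{sre}, we use the logarithmic transformation of Section 3.2 for existence and the monotone bounds of Theorem \ref{bounds} for the two-sided bound away from $0$ and $\infty$.

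\emph{Existence via the logarithmic BSDE.} Consider \eqref{log-transformation} for $(\check P,\check\Lambda)$. Its generator is
\[
f(t,z)=(2r-|\rho|^2)-2\langle\rho,z\rangle-z^*\Pi z+\tfrac12|z|^2 .
\]
Since $r,\mu,\sigma$ are bounded and $\sigma\sigma^*\ge\varepsilon I$, the process $\rho$ is bounded. Moreover, $0\le\Pi\le I_n$, so $|f(t,z)|\le C(1+|z|^2)$. The terminal value $0$ is bounded. Theorem \ref{BSDE-quadratic} (Kobylanski) therefore gives a solution with $\check P$ bounded and continuous, and with $\check\Lambda\cdot W$ a BMO martingale, in particular $\check\Lambda\in L^2_{\mathbb F}$.

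Set $P=e^{\check P}$ and $\Lambda=P\check\Lambda$. It\^o's formula reverses the computation leading to \eqref{log-transformation}, so $(P,\Lambda)$ solves \eqref{sre}. Because $\check P$ is bounded, $c^{-1}\le P\le c$, that is $P\in L^{\infty,c}_{\mathbb F}(0,T;\mathbb R_{\gg1})$, and $\Lambda\in L^2_{\mathbb F}$. As a cross-check, $P$ lies between $P_1$ and $1/K_1$, and both are obtained explicitly as reciprocals of solutions of the linear BSDEs \eqref{linear-P1} and \eqref{linear-K1} with bounded coefficients.

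\emph{Uniqueness.} Take any solution $(P',\Lambda')$ of \eqref{sre} in the stated class. Since $P'$ is bounded away from $0$, the log-transform $(\ln P',\Lambda'/P')$ is well defined and solves \eqref{log-transformation}. Bounded solutions of quadratic BSDEs are unique by Theorem \ref{BSDE-quadratic}. One subtlety must be checked here: we need $\Lambda'/P'\in L^2$, and uniqueness should be taken in the class where $\check P$ is bounded. Boundedness of $\ln P'$ is automatic from $P'\in\mathbb R_{\gg1}$, and the usual uniqueness statement for bounded solutions then applies. Hence $P'=P$ and $\Lambda'=\Lambda$.

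\emph{The linear BSDE \eqref{BSDE}.} This is the main obstacle: the coefficient $\beta:=\rho-P^{-1}(I_n-\Pi)\Lambda$ in front of $\Gamma$ is not bounded, only $\beta\cdot W$ is BMO (since $\Lambda\cdot W$ is BMO and $P^{-1}$ is bounded). My plan is a Girsanov argument.
\begin{itemize}
\item By Kazamaki's criterion, $\mathcal E(\beta\cdot W)$ is a uniformly integrable martingale satisfying a reverse Hölder inequality, so it defines an equivalent measure $\mathbb Q$ under which $\tilde W=W-\int\beta\,\dt$ is a Brownian motion.
\item The candidate solution is $H(t)=\mathbb E^{\mathbb Q}\big[e^{-\int_t^T r\,\ds}\,\big|\,\mathcal F_t\big]$, which is bounded, positive and continuous because $r$ is bounded.
\item The process $\Gamma$ comes from martingale representation of the $\mathbb Q$-martingale $e^{-\int_0^t r\,\ds}H(t)$ with respect to $\tilde W$, which is valid in the Brownian filtration via Bayes' formula.
\item Since $H$ is bounded, the energy inequality for the semimartingale $H$ (It\^o on $H^2$), combined with the BMO property of $\beta$, gives $\Gamma\cdot W\in$ BMO; hence $\Gamma\in L^2_{\mathbb F}$ and $H\in L^{2,c}_{\mathbb F}$.
\end{itemize}

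For uniqueness of \eqref{BSDE}, take the difference $(\delta H,\delta\Gamma)$ of two solutions in the stated class. It satisfies the homogeneous equation with zero terminal value, so $e^{-\int_0^t r\,\ds}\delta H$ is a $\mathbb Q$-local martingale. To show it is a true $\mathbb Q$-martingale we use the reverse Hölder inequality for $\mathcal E(\beta\cdot W)$ together with the $L^2$ integrability under $\mathbb P$. It therefore vanishes, giving $\delta H=0$.

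Verifying this true-martingale property under only $L^2$-integrability is the delicate point. If the reverse Hölder exponent is not enough, I would instead restrict uniqueness to solutions with $\Gamma\cdot W$ BMO, or cite Lim \cite{lim2004quadratic}, where this linear BSDE is treated.
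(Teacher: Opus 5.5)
Your treatment of the SRE is correct. It coincides with the logarithmic-transformation argument the paper itself gives in Section~3.2; for this theorem the paper offers no proof, only a reference to Kohlmann and Tang. The existence part for \eqref{BSDE} also works after a sign correction. Since $\mathrm{d}H-rH\,\mathrm{d}t=\langle\Gamma,\mathrm{d}W+\beta\,\mathrm{d}t\rangle$, the density must be $M:=\mathcal{E}(-\beta\cdot W)$, and the $\mathbb{Q}$-Brownian motion is $\tilde W=W+\int_0^\cdot\beta\,\mathrm{d}s$.

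The genuine gap is uniqueness for \eqref{BSDE} in the class $L^{2,c}_{\mathbb{F}}(0,T;\mathbb{R})\times L^{2}_{\mathbb{F}}(0,T;\mathbb{R}^n)$ that the theorem asserts.

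\emph{Why your step fails.} The true-martingale step needs
$\mathbb{E}^{\mathbb{Q}}[\sup_t|\delta H_t|]\le(\mathbb{E}[M_T^2])^{1/2}(\mathbb{E}[\sup_t|\delta H_t|^2])^{1/2}<\infty$, i.e.\ $M_T\in L^2(\mathbb{P})$. The BMO property of $\beta\cdot W$ gives only a reverse H\"older inequality $R_p$ for some $p>1$, determined by its BMO norm. That norm is not small in general, so $p\ge 2$ is not available. Your fallbacks either shrink the uniqueness class, which proves a weaker statement, or defer to the literature.

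\emph{The missing idea.} The Riccati equation itself supplies this $L^2$ bound. Set $\check\Lambda=\Lambda/P$ and use \eqref{sre}, $\Pi\rho=\rho$, and $|\check\Lambda|^2=|\Pi\check\Lambda|^2+|(I_n-\Pi)\check\Lambda|^2$. It\^o's formula then shows that $M_t^2e^{-2\int_0^t r\,\mathrm{d}s}/P_t$ has zero drift.
\begin{itemize}
\item This process is a positive local martingale, hence a supermartingale. Since $P(T)=1$, this gives $\mathbb{E}[M_T^2]\le e^{2\|r\|_\infty T}/P(0)$.
\item $M$ is a true martingale by Kazamaki, so Doob's inequality gives $\mathbb{E}[\sup_t M_t^2]<\infty$.
\item Now $e^{-\int_0^\cdot r\,\mathrm{d}s}\,\delta H\,M$ is a $\mathbb{P}$-local martingale. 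It is dominated by an integrable random variable and vanishes at $T$.
\item Hence $\delta H\equiv0$, and then $\delta\Gamma=0$.
\end{itemize}
Up to a constant, this supermartingale is $PX^2$ along the closed-loop wealth $X$ generated by the feedback of Theorem~\ref{slq} with $\lambda=d$. For that wealth, $e^{\int_0^t r\,\mathrm{d}s}P_tX_t=P(0)x_0M_t$.
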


For the proof of the above theorem, we refer the reader to works such as Kohlmann and Tang \cite{kohlmann2002global,kohlmann2003minimization}.

\begin{theorem} \label{slq}
    The stochastic LQ control {\bf Problem \ref{prob3}} is well-posed, with the unique optimal feedback control given by
    \begin{equation*}
        \pi^* = - (\sigma\sigma^*)^{-1} \left[ \left( \tilde{\mu} + \frac{\sigma \Lambda}{P}\right) \left( X + (\lambda-d)H \right)+ (\lambda-d)\sigma \Gamma \right],
    \end{equation*}
    and its corresponding optimal cost functional given by
    \begin{equation*} \label{cost}
        \mathscr{J}^*(d,\lambda) = P(0)|x_0 + (\lambda-d)H(0)|^2 + c_0 (\lambda-d)^2,
    \end{equation*}
    where
    \begin{equation*}
        c_0 :=  \mathbb{E} \left[ \int_{0}^{T}P\Gamma^* [I_{n \times n} - \sigma^* (\sigma\sigma^*)^{-1} \sigma]\Gamma \dt \right].
    \end{equation*}
\end{theorem}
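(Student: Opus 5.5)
The plan is a completion-of-squares argument built on the solutions $(P,\Lambda)$ and $(H,\Gamma)$ from Theorem \ref{sre-positive}. Write $\beta:=\lambda-d$, so that $b=d-\lambda=-\beta$ and $X(T)-b=X(T)+\beta H(T)$, since $H(T)=1$. For an arbitrary admissible $\pi\in\mathcal{A}$ with state $X$ from \eqref{dynamics}, introduce the auxiliary process $Y:=X+\beta H$. By \eqref{dynamics} and \eqref{BSDE},
\begin{equation*}
\mathrm{d}Y=\Big[rY+\langle\tilde\mu,\pi\rangle+\beta\big\langle \rho-\tfrac{1}{P}(I_n-\Pi)\Lambda,\Gamma\big\rangle\Big]\dt+\langle\sigma^*\pi+\beta\Gamma,\dw\rangle ,
\end{equation*}
where $\Pi=\sigma^*(\sigma\sigma^*)^{-1}\sigma$. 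I would then apply It\^o's formula to $PY^2$, using \eqref{sre} for $\mathrm{d}P$, and organise the drift as a quadratic form in $\pi$.

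The $\pi$-dependent part of the drift is
\begin{equation*}
P\,\pi^*\sigma\sigma^*\pi+2\pi^*\big[(P\tilde\mu+\sigma\Lambda)Y+\beta P\sigma\Gamma\big].
\end{equation*}
Completing the square with the positive definite matrix $P\sigma\sigma^*$ gives
\begin{equation*}
P\,\big|(\sigma\sigma^*)^{1/2}(\pi-\pi^*)\big|^2-\tfrac{1}{P}\big[(P\tilde\mu+\sigma\Lambda)Y+\beta P\sigma\Gamma\big]^*(\sigma\sigma^*)^{-1}\big[(P\tilde\mu+\sigma\Lambda)Y+\beta P\sigma\Gamma\big],
\end{equation*}
where $\pi^*$ is exactly the feedback in the statement.

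The remaining $\pi$-free terms are sorted by powers of $Y$:
\begin{itemize}
\item The $Y^2$ terms cancel by the SRE \eqref{sre}. This uses $\tilde\mu^*(\sigma\sigma^*)^{-1}\tilde\mu=|\rho|^2$ and $\tilde\mu^*(\sigma\sigma^*)^{-1}\sigma\Lambda=\langle\rho,\Lambda\rangle$.
\item The $Y^1$ terms cancel by the drift of \eqref{BSDE}. Here the factor $\frac1P(I_n-\Pi)\Lambda$ is designed precisely to absorb the cross term $2\beta Y\langle\Lambda,\Gamma\rangle-2\beta Y\Lambda^*\Pi\Gamma$.
\item The $Y^0$ terms leave $\beta^2P\big(|\Gamma|^2-\Gamma^*\Pi\Gamma\big)=\beta^2P\,\Gamma^*(I_n-\Pi)\Gamma\ge 0$.
\end{itemize}
Integrating from $0$ to $T$ and taking expectations then gives
\begin{equation*}
\mathbb{E}|X(T)-b|^2=P(0)|x_0+\beta H(0)|^2+c_0\beta^2+\mathbb{E}\int_0^T P\,\big|(\sigma\sigma^*)^{1/2}(\pi-\pi^*)\big|^2\dt .
\end{equation*}
This identity yields the optimal cost, and it yields optimality of $\pi^*$. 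Uniqueness follows because $P\ge c^{-1}>0$ and $\sigma\sigma^*\ge\varepsilon I$, so the last term vanishes only when $\pi=\pi^*$ a.e.\ a.s.

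The main obstacle is justification rather than algebra, for two reasons. First, the stochastic integral in $\mathrm{d}(PY^2)$ is only a local martingale, since $H$ and $\Gamma$ are merely in $L^2$. I would localise with stopping times $\tau_N\uparrow T$, use $P\in L^{\infty,c}_{\mathbb F}(0,T;\mathbb R_{\gg1})$ together with $\mathbb{E}\sup_t|Y|^2<\infty$ (from $X$, $H\in L^{2,c}$), and pass $N\to\infty$ by dominated convergence. Second, one must check that $\pi^*$ is admissible, i.e.\ that the closed-loop linear SDE for $Y$ has a solution with $\pi^*\in L^2_{\mathbb F}$. The feedback coefficient $\tilde\mu+\sigma\Lambda/P$ is only in $L^2$ because of $\Lambda$, so this is the delicate step. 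I would first try the standard route of Kohlmann--Tang and Lim: show that $PY^2$ along the closed loop is a nonnegative supermartingale via the same identity with the square term absent, deduce $\mathbb{E}\sup|Y|^2<\infty$ from the uniform bounds on $P$, and then obtain $\pi^*\in L^2$ from the identity itself, since the stopped costs are uniformly bounded.
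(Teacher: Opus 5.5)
Your completion-of-squares argument is the paper's own route: It\^o's formula for $P(X+(\lambda-d)H)^2$, with the details deferred to Kohlmann--Tang. Your bookkeeping is correct. The $Y^2$ terms cancel by \eqref{sre}, the $Y$ terms cancel by \eqref{BSDE}, and the residue is $\beta^2P\Gamma^*(I_n-\Pi)\Gamma$. The localization for an arbitrary admissible $\pi$ (dominated convergence on $P_{\tau_N}Y_{\tau_N}^2$, monotone convergence on the nonnegative drift) is also fine.

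The admissibility step, however, would not go through as you sketch it.

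\textbf{First}, along the closed loop the drift of $PY^2$ is $\beta^2P\Gamma^*(I_n-\Pi)\Gamma\ge 0$. So $PY^2$ is a local \emph{sub}martingale unless $\beta(I_n-\Pi)\Gamma\equiv0$ (e.g.\ $\Gamma=0$ under deterministic $r$). What is true is that $PY^2-\beta^2\int_0^{\cdot}P\Gamma^*(I_n-\Pi)\Gamma\,\mathrm{d}s$ is a local martingale bounded below by an integrable random variable, hence a supermartingale. This gives $\mathbb{E}[P_\tau Y_\tau^2]\le P(0)Y(0)^2+\beta^2c_0$ for all stopping times $\tau\le T$.

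\textbf{Second}, such a bound does not yield $\mathbb{E}\sup_t|Y_t|^2<\infty$, since a nonnegative supermartingale need not have an integrable running maximum.

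\textbf{Third}, and most importantly, the identity evaluated at $\pi=\pi^*$ has a vanishing square term, so it carries no information about $\|\pi^*\|_{L^2}$. Bounded stopped costs control $\mathbb{E}|Y_{\tau_N}|^2$, not $\int|\pi^*|^2$.

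You need a separate energy estimate. The natural one uses the wealth equation, whose coefficients are bounded. With $\tau_N$ localizing, It\^o's formula for $X^2$ along the closed loop gives
\begin{equation*}
\mathbb{E}\int_0^{\tau_N}|\sigma^*\pi^*|^2\,\mathrm{d}t=\mathbb{E}|X_{\tau_N}|^2-x_0^2-\mathbb{E}\int_0^{\tau_N}\big(2rX^2+2X\langle\tilde\mu,\pi^*\rangle\big)\,\mathrm{d}t .
\end{equation*}
Since $|\sigma^*\pi|^2\ge\varepsilon|\pi|^2$ and $\tilde\mu$ is bounded, $2|X\langle\tilde\mu,\pi^*\rangle|\le\frac12|\sigma^*\pi^*|^2+CX^2$. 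Hence
\begin{equation*}
\tfrac12\,\mathbb{E}\int_0^{\tau_N}|\sigma^*\pi^*|^2\,\mathrm{d}t\le\mathbb{E}|X_{\tau_N}|^2+C\int_0^T\mathbb{E}|X_{t\wedge\tau_N}|^2\,\mathrm{d}t .
\end{equation*}
The right-hand side is bounded uniformly in $N$, because $X=Y-\beta H$, $H\in L^{2,c}_{\mathbb F}(0,T;\mathbb{R})$, and the stopping-time bound on $P_\tau Y_\tau^2$ holds with $P\ge c^{-1}$. Monotone convergence then gives $\pi^*\in L^2_{\mathbb F}(0,T;\mathbb{R}^m)$.

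After that, $X^*\in L^{2,c}_{\mathbb F}(0,T;\mathbb{R})$ follows from the linear wealth SDE with bounded coefficients. So $\mathbb{E}\sup|Y|^2<\infty$ is an output of this step rather than an input, and your verification identity then applies to $\pi^*$ itself.
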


The core idea of the proof relies on applying It\^o's formula to the process $P (X +(\lambda-d) H)^2$. Detailed arguments can be found in, e.g., Kohlmann and Tang \cite{kohlmann2002global}, and thus the proof is omitted here. We can now express the optimal value $J^*(d,\lambda)$ as follows:
\begin{equation*}
    J^*(d,\lambda) = P(0)|x_0 + (\lambda-d)H(0)|^2 + c_0 (\lambda-d)^2 - \lambda^2.
\end{equation*}

\begin{theorem}
    The following strict inequality holds:
    \begin{equation*}
        P(0)H^2(0) + c_0 < 1.
    \end{equation*}
    Moreover, the optimal value of the constrained {\bf Problem \ref{prob1}} is given by
    \begin{equation*}
        \mathcal{J}^*(d) = c_0 d^2 + P(0)(x_0 - H(0)d)^2 + \frac{\left[P(0)H(0)(x_0 - H(0)d) - c_0d\right]^2}{1 - c_0 - P(0)H(0)^2},
    \end{equation*}
    the optimal Lagrange multiplier $\lambda^*(d)$ is given by
    \begin{equation*}
        \lambda^*(d) = \frac{P(0)H(0)\left(x_0-H(0)d-c_0d\right)}{1-c_0-P(0)H(0)^2}.
    \end{equation*}
\end{theorem}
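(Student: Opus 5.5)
The plan has two parts. First, identify $P(0)H^2(0)+c_0$ as the optimal value of an instance of {\bf Problem \ref{prob3}} and use the feasibility condition to make the inequality strict. Second, obtain $\mathcal{J}^*(d)$ by Lagrange duality, maximizing the explicit concave quadratic $\lambda\mapsto J^*(d,\lambda)$.

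\emph{Step 1 (the strict inequality).} In Theorem \ref{slq} take initial wealth $x_0=0$ and $b=1$, i.e.\ $\lambda-d=-1$. The optimal cost formula then gives
\[
P(0)H^2(0)+c_0=\min_{\pi\in\mathcal{A}}\mathbb{E}\big[|X^{0,\pi}(T)-1|^2\big],
\]
where $X^{0,\pi}$ denotes the solution of \eqref{dynamics} started from $0$. The choice $\pi\equiv 0$ gives $X^{0,0}\equiv 0$, so the left side is at most $1$. For strictness I would use feasibility (Lemma \ref{feasible-condition}, assumed throughout). By linearity of \eqref{dynamics}, $X^{x_0,\pi}=X^{x_0,0}+X^{0,\pi}$. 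Since $\mathbb{E}[X^{x_0,\pi}(T)]=d$ must be attainable for every $d\in\mathbb{R}$, there is some $\pi$ with $m:=\mathbb{E}[X^{0,\pi}(T)]\neq 0$. For the control $\varepsilon\pi$,
\[
\mathbb{E}\big[|\varepsilon X^{0,\pi}(T)-1|^2\big]=1-2\varepsilon m+\varepsilon^2\,\mathbb{E}\big[|X^{0,\pi}(T)|^2\big],
\]
which is $<1$ for small $\varepsilon$ with the sign of $m$. Hence $P(0)H^2(0)+c_0<1$.

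\emph{Step 2 (duality).} {\bf Problem \ref{prob1}} minimizes a convex quadratic functional of $\pi$, because $\pi\mapsto X(T)$ is affine, subject to the single affine constraint $\mathbb{E}[X(T)]=d$. Since every $d$ is feasible, $d$ lies in the interior of the range of the constraint map. Lagrange duality for convex programs with finitely many equality constraints (e.g.\ Luenberger's theorem) then gives
\[
\mathcal{J}^*(d)=\sup_{\lambda\in\mathbb{R}}J^*(d,\lambda),
\]
with the supremum attained. Checking this interior-point condition for strong duality is the only non-computational point, and I expect it to be the main obstacle. If it causes trouble, I would verify the result directly instead: show that the feedback control of Theorem \ref{slq} at $\lambda^*$ satisfies $\mathbb{E}[X^*(T)]=d$, since this is exactly the first-order condition $\partial_\lambda J^*=0$. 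Then compare with an arbitrary feasible $\pi$ via $\mathcal{J}(\pi,d)=J(\pi,d,\lambda^*)\ge J^*(d,\lambda^*)$.

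\emph{Step 3 (computation).} Set $a=\lambda-d$. Then
\[
J^*(d,\lambda)=P(0)\big(x_0+aH(0)\big)^2+c_0a^2-(a+d)^2 .
\]
Its leading coefficient in $a$ is $P(0)H^2(0)+c_0-1<0$ by Step 1, so it is strictly concave with a unique maximizer. The first-order condition is linear in $a$:
\[
a^*\big(1-c_0-P(0)H(0)^2\big)=P(0)H(0)x_0-d .
\]
Then $\lambda^*=a^*+d$, and substituting back and completing the square yields the stated $\mathcal{J}^*(d)$ and $\lambda^*(d)$. This step is routine algebra.
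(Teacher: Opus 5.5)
Your proof is correct. Steps 2--3 follow the paper: Lagrange duality via Luenberger, then maximizing the quadratic $J^*(d,\cdot)$. You differ from the paper in how you prove the strict inequality.

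\textbf{The strict inequality.} The paper extracts $P(0)H^2(0)+c_0<1$ \emph{from} duality. Since $\mathcal{J}^*(d)=\max_\lambda J^*(d,\lambda)$ must be finite and attained for every $d$, the leading coefficient $P(0)H(0)^2+c_0-1$ must be negative. In the borderline case, $J^*(d,\cdot)$ is affine with slope $2(P(0)H(0)x_0-d)$, which is nonzero for generic $d$.

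You instead identify $P(0)H^2(0)+c_0$ as the value of {\bf Problem \ref{prob3}} with $x_0=0$, $b=1$, i.e.\ the least mean-square error in replicating the unit claim from zero wealth. You then get strictness from feasibility by perturbing a control with $\mathbb{E}[X^{0,\pi}(T)]\neq 0$. Theorem \ref{slq} is stated for the given $x_0$, but its completion-of-squares proof does not use the initial state, so $x_0=0$ is legitimate.

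\textbf{What each route buys.} The paper's route is shorter once the duality theorem is granted. Yours is duality-free, shows exactly where feasibility enters, and gives the constant a financial meaning. Together with your Step 2 fallback, it makes the whole proof self-contained. The fallback needs one line of justification, which you did not supply. For all $\lambda'$,
\[
J^*(d,\lambda')\le J(\pi_\lambda,d,\lambda')=J^*(d,\lambda)+2(\lambda'-\lambda)\bigl(\mathbb{E}[X^{\pi_\lambda}(T)]-d\bigr),
\]
so $2(\mathbb{E}[X^{\pi_\lambda}(T)]-d)$ is a supergradient of the differentiable function $J^*(d,\cdot)$ at $\lambda$. Hence it equals $\partial_\lambda J^*(d,\lambda)$, which vanishes at $\lambda^*$.

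\textbf{A correction to Step 3.} This step is not just routine algebra. Your first-order condition gives
\[
\lambda^*(d)=a^*+d=\frac{P(0)H(0)\bigl(x_0-H(0)d\bigr)-c_0d}{1-c_0-P(0)H(0)^2}.
\]
This is \emph{not} the displayed formula. The statement puts $c_0d$ inside the factor $P(0)H(0)$, and the two numerators differ by $c_0d\,(P(0)H(0)-1)$. So the displayed $\lambda^*(d)$ is a misprint. It is harmless in the deterministic-rate case $c_0=0$ used afterwards.

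The displayed $\mathcal{J}^*(d)$ is correct. Write $D:=1-c_0-P(0)H(0)^2$. Then $J^*(d,\lambda)=J^*(d,\lambda^*)-D(\lambda-\lambda^*)^2$, which gives
\[
\mathcal{J}^*(d)=J^*(d,0)+D(\lambda^*)^2=c_0d^2+P(0)\bigl(x_0-H(0)d\bigr)^2+D(\lambda^*)^2 .
\]
The term $D(\lambda^*)^2$ is exactly the fraction in the statement, evaluated with your (correct) $\lambda^*$. You should therefore record the corrected multiplier rather than claim agreement with both displayed formulas.
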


\begin{proof}
    According to the classical Lagrange duality theorem (see, e.g., Luenberger \cite{luenberger1997optimization}), for any target $d \in \mathbb{R}$, the optimal value of the constrained problem satisfies the following duality relationship:
    \begin{equation} \label{Lagrange}
        \mathcal{J}^*(d) = \max_{\lambda \in \mathbb{R}} J^*(d, \lambda).
    \end{equation}
    Since $J^*(d, \lambda)$ is a quadratic function of $\lambda$, its first and second derivatives with respect to $\lambda$ are given by
    \begin{equation*}
        \frac{\partial}{\partial \lambda} J^*(d, \lambda) = 2P(0)H(0) [x_0 + (\lambda - d)H(0)] + 2c_0(\lambda - d) - 2\lambda,
    \end{equation*}
    and
    \begin{equation*}
        \frac{\partial^2}{\partial \lambda^2} J^*(d, \lambda) = 2P(0)H(0)^2 + 2c_0 - 2.
    \end{equation*}
    For the supremum in \eqref{Lagrange} to be finite and attainable for every $d \in \mathbb{R}$, the quadratic function $J^*(d, \lambda)$ must be strictly concave in $\lambda$. This strict concavity implies that the second derivative must be strictly negative, i.e.,
    \begin{equation*}
        2P(0)H(0)^2 + 2c_0 - 2 < 0,
    \end{equation*}
    which immediately yields $P(0)H(0)^2 + c_0 < 1$. 

    Consequently, the optimal dual variable $\lambda^*$ is uniquely determined by the first-order condition $\left. \frac{\partial}{\partial \lambda} J^*(d, \lambda) \right|_{\lambda = \lambda^*} = 0$. Solving this equation for $\lambda^*$ and substituting it back into the expression for $J^*(d, \lambda^*)$ yields the desired formula for $\mathcal{J}^*(d)$.
\end{proof}

To explicitly construct the mean-variance efficient frontier, it is necessary to determine the initial values $P(0)$ and $H(0)$, which depend on the solutions to BSDEs \eqref{sre} and \eqref{BSDE}. In standard financial market models, the risk-free interest rate $r(\cdot)$ is typically assumed to be deterministic, and our multi-factor stochastic volatility model in Section \eqref{empirical} admits this. Motivated by this, we impose the following assumption in our analysis.

\begin{assumption}[Deterministic Interest Rate] \label{assump-r}
    The interest rate $r(\cdot)$ of the risk-free bond is a deterministic function.
\end{assumption}

Under Assumption \ref{assump-r}, by the uniqueness of the solution, the linear BSDE \eqref{BSDE} degenerates into the following ordinary differential equation (ODE):
\begin{equation} \label{ode}
    \left\{\begin{aligned}
        \mathrm{d}H(t) &= r(t) H(t) \dt, \qquad t \in [0, T), \\
        H(T) &= 1.
    \end{aligned}\right.
\end{equation}
Consequently, we obtain the explicit solution:
\begin{equation*}
    (H(t), \Gamma(t)) = \left( \exp\left\{ -\int_{t}^{T} r(s) \ds \right\}, 0 \right).
\end{equation*}
This immediately implies $c_0 = 0$. The optimal cost $\mathcal{J}^*(d)$ then simplifies to
\begin{equation*}
    \mathcal{J}^*(d) = \frac{P(0)(x_0 - H(0)d)^2}{1 - P(0)H(0)^2}.
\end{equation*}
and the optimal Lagrange multiplier $\lambda^*(d)$ simplies to
    \begin{equation*}
        \lambda^*(d) = \frac{P(0)H(0)\left(x_0-H(0)d\right)}{1-P(0)H(0)^2},
    \end{equation*}
    Replace with the value of $\lambda^*(d)-d$ and notice that $\Gamma=0$, we obtain the optimal feedback control of Problem \ref{prob1}:
    \begin{equation} \label{strategy}
        \pi^* = - (\sigma\sigma^*)^{-1} \left( \tilde{\mu} + \frac{\sigma \Lambda}{P}\right)\left(X + \left(\frac{P(0)H(0)x_0-d}{1-P(0)H(0)^2}\right)H \right).
    \end{equation}

Similarly, returning to the auxiliary BSDE \eqref{auxiliary}, it also reduces to an ODE with the unique solution
\begin{equation*}
    (y(t), z(t)) = \left( \exp\left\{ -\int_{t}^{T} r(s) \ds \right\}, 0 \right).
\end{equation*}
Accordingly, the feasibility condition \eqref{posi-definite} is equivalent to
\begin{equation*}
    \mathbb{E} \left[ \int_{0}^{T} |\tilde{\mu}(t)|^2 \dt \right] > 0.
\end{equation*}
This is a highly natural and mild assumption, as it merely requires that the appreciation rates of the risky stocks deviate from the risk-free bond rate in a mean-square sense.

\section{Comparison Theorem for BSDEs with Quadratic Growth} \label{appendix-quadratic}

In this Appendix, we recall some classical results for BSDEs whose generator has quadratic growth in the martingale term. More precisely, consider the BSDE
\begin{equation} \label{quadratic_BSDE}
    y(t) = \xi + \int_t^T f\left(s, y(s), z(s)\right) \mathrm{d}s - \int_t^T z(s) \mathrm{d}W(s), \qquad t \in [0,T],
\end{equation}
under the following assumptions:
\begin{enumerate}
    \item \(\xi \in L_{\mathcal{F}_T}^{\infty}(\mathbb{R})\);
    \item there exists a constant \(C>0\) such that, for all
    \((y,z)\in \mathbb{R}\times \mathbb{R}^n\),
    \[
        |f(t,\omega,y,z)|
        \leq C\bigl(1+|y|+|z|^2\bigr),
        \qquad \mathrm{d}t\otimes \mathrm{d}\mathbb{P}\text{-a.e.};
    \]
    \item there exists a constant \(C>0\) such that, for all
    \((y_i,z_i)\in \mathbb{R}\times \mathbb{R}^n\), \(i=1,2\),
    \[
        |f(t,\omega,y_1,z_1)-f(t,\omega,y_2,z_2)|
        \leq C\left(
        |y_1-y_2|
        +
        \bigl(1+|z_1|+|z_2|\bigr)|z_1-z_2|
        \right),
        \qquad \mathrm{d}t\otimes \mathrm{d}\mathbb{P}\text{-a.e.}
    \]
\end{enumerate}

\begin{theorem}\label{BSDE-quadratic}
Suppose that, for each \(k \geq 0\), the data \((\xi_k,f_k)\) satisfy the above assumptions. Then the BSDE \eqref{quadratic_BSDE} with terminal condition \(\xi_k\) and generator \(f_k\) admits a unique solution
\[ (y_k(\cdot),z_k(\cdot)) \in L_{\mathbb{F}}^{\infty}(0,T;\mathbb{R}) \times L_{\mathbb{F}}^{2}(0,T;\mathbb{R}^n). \] 
Moreover, if \(\xi_1 \leq \xi_2\) a.s. and $f_1(\cdot,y,z) \leq f_2(\cdot,y,z)$ $\mathrm{d}t \otimes \mathrm{d}\mathbb{P}\text{-a.e.}$, for all \((y,z)\in \mathbb{R}\times \mathbb{R}^n\), then
\[
y_1(t) \leq y_2(t), \qquad 0 \leq t \leq T, \quad \text{a.s.} 
\]
In particular, \(y_1(0) \leq y_2(0)\).
\end{theorem}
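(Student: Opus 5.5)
This is the classical Kobylanski result for bounded terminal data. I would prove it in two parts: existence and uniqueness first, then comparison derived from uniqueness-type linearization.

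\emph{Existence.} The plan follows Kobylanski \cite{kobylanski2000backward}: truncate, solve, and pass to a monotone limit.
\begin{enumerate}
\item Since $|y|$ enters the growth bound only linearly, we first show that any bounded solution satisfies $\|y\|_\infty \le M:=(\|\xi\|_\infty+CT)e^{CT}$. To see this, write $f(s,y,z)=f(s,0,z)+a_s y$ with $|a_s|\le C$.
\item We then localize $f$ in $y$ to $[-M,M]$ and remove the quadratic part by an exponential change of variables $Y=e^{\gamma y}$ with $\gamma=2C$. The transformed generator has the bound $\gamma C|z|^2 - \tfrac12\gamma^2|z|^2\le 0$ in the quadratic part, which puts the equation in a form with one-sided bounds.
\item Next we approximate $f$ by Lipschitz generators $f_p$ obtained by inf/sup-convolution in $z$. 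These can be chosen monotone in $p$ with $f_p\to f$ locally uniformly.
\item Standard Lipschitz BSDE theory and comparison give solutions $(y_p,z_p)$. These are monotone in $p$ and uniformly bounded by step 1.
\item The key estimate is a uniform energy bound on $\mathbb{E}\int_0^T|z_p|^2\,\mathrm{d}s$. We obtain it by applying It\^o's formula to $\phi(y_p)$ with $\phi(x)=(e^{2Cx}-1-2Cx)/(2C)^2$, whose property $\phi''-2C|\phi'|\ge 1$ absorbs the quadratic term.
\item The same trick applied to $\phi(y_p-y_q)$ yields strong $L^2$ convergence of $z_p$. Passing to the limit in the equation then gives a solution.
\end{enumerate}

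\emph{Uniqueness and comparison.} Both follow from a linearization argument.
\begin{enumerate}
\item Let $(y_1,z_1)$ and $(y_2,z_2)$ solve the equations with data $(\xi_1,f_1)$ and $(\xi_2,f_2)$, and set $\delta y=y_1-y_2$, $\delta z=z_1-z_2$.
\item Write
\[
f_1(s,y_1,z_1)-f_2(s,y_2,z_2)=\bigl[f_1(s,y_2,z_2)-f_2(s,y_2,z_2)\bigr]+a_s\,\delta y+\langle b_s,\delta z\rangle .
\]
By assumption 3, $|a_s|\le C$ and $|b_s|\le C(1+|z_1|+|z_2|)$. The first bracket is $\le 0$.
\item Since $y_i$ are bounded, the energy bound from the existence step shows that $\int z_i\,\mathrm{d}W$ are BMO martingales. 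Hence $b\in$ BMO, and by Kazamaki's criterion $\mathcal{E}(\int b\,\mathrm{d}W)$ is a uniformly integrable martingale defining an equivalent measure $\mathbb{Q}$.
\item Under $\mathbb{Q}$ we have
\[
\delta y(t)=\mathbb{E}^{\mathbb{Q}}\Bigl[e^{\int_t^T a}\,\delta\xi+\int_t^T e^{\int_t^s a}\bigl(f_1-f_2\bigr)(s,y_2,z_2)\,\mathrm{d}s\,\Big|\,\mathcal{F}_t\Bigr]\le 0 .
\]
This gives comparison, and uniqueness follows by symmetry.
\item Continuity of paths upgrades the a.e. inequality to "for all $t$, a.s.", and in particular gives $y_1(0)\le y_2(0)$.
\end{enumerate}

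\textbf{Main obstacle.} I expect the $L^2$ compactness of $z_p$ in the existence step to be the main difficulty, since quadratic growth prevents any direct Gronwall estimate; the exponential test function $\phi$ is what I would rely on to handle it. The BMO/Girsanov step in the comparison argument is the secondary difficulty. Given that the paper only recalls this result, I would ultimately cite \cite{kobylanski2000backward} for the details.
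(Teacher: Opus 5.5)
Your outline is correct and matches what the paper relies on: the paper gives no argument of its own and defers to Kobylanski \cite{kobylanski2000backward} (exponential change of variables, monotone Lipschitz approximation, exponential test function for strong $L^2$ convergence of the $z$-components) and to Chapter~7 of \cite{zhang2017backward} (bounded $y$ gives BMO $z$, then linearization plus Girsanov for uniqueness and comparison), which is the structure you reconstruct. Two small repairs are needed: first, $\phi''-2C|\phi'|\ge 1$ holds only on $[0,\infty)$ (for $x<0$ it equals $2e^{2Cx}-1$), so apply $\phi$ to $y_p+M$ for the energy bound, and to the nonnegative difference of the monotone approximants with a larger exponent for the convergence step; second, your a priori bound $M$ requires linearizing in $z$ as well (with the BMO/Girsanov step), not only in $y$, since $f(s,0,z)$ is still quadratic.
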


Since these results are well known, we omit the proofs and refer the reader to the standard literature on quadratic BSDEs (see e.g. Kobylanski \cite{kobylanski2000backward} or Chapter $7$ in Zhang \cite{zhang2017backward}).

\section{Additional Numerical and Portfolio Diagnostics}

\subsection{Training and Out-of-Sample Diagnostics for the Linear BSDEs}
\label{app:linear_bsde_figures}

Figure~\ref{fig:linear_bsde_results} reports the diagnostic plots for the two
linear BSDEs \eqref{linear-P1} and \eqref{linear-K1}. The top row shows the training dynamics, including the learned
initial value, terminal expectation, and MSE loss. The bottom row shows the
out-of-sample path distributions and verifies that both processes are centered
around the terminal target \(1.0\).

\begin{figure}[htbp]
    \centering
    \begin{subfigure}{0.49\textwidth}
        \centering
        \includegraphics[width=\linewidth]{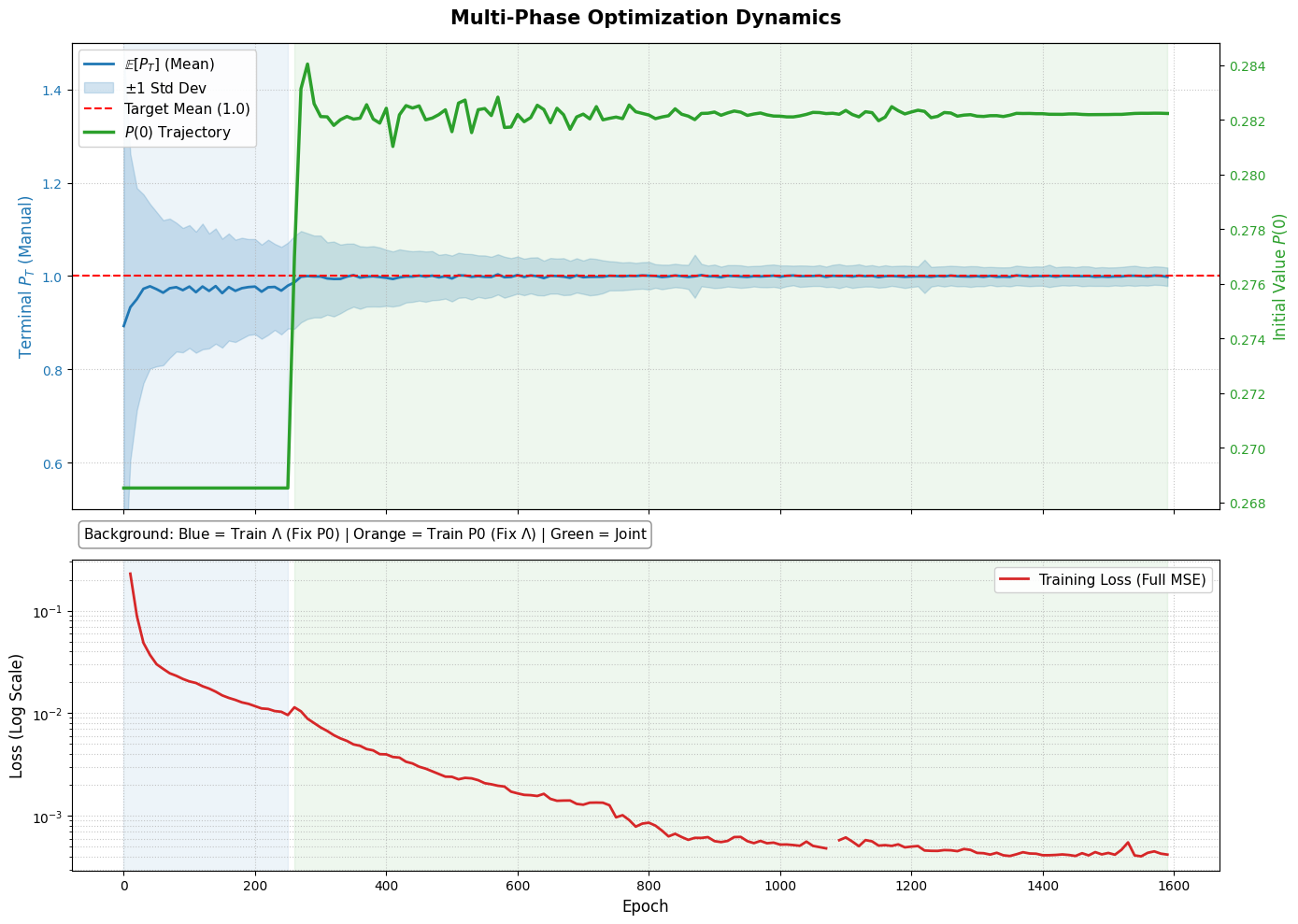}
        \caption{Training: \(\widetilde K_1\)}
        \label{fig:train_tK1}
    \end{subfigure}
    \hfill
    \begin{subfigure}{0.49\textwidth}
        \centering
        \includegraphics[width=\linewidth]{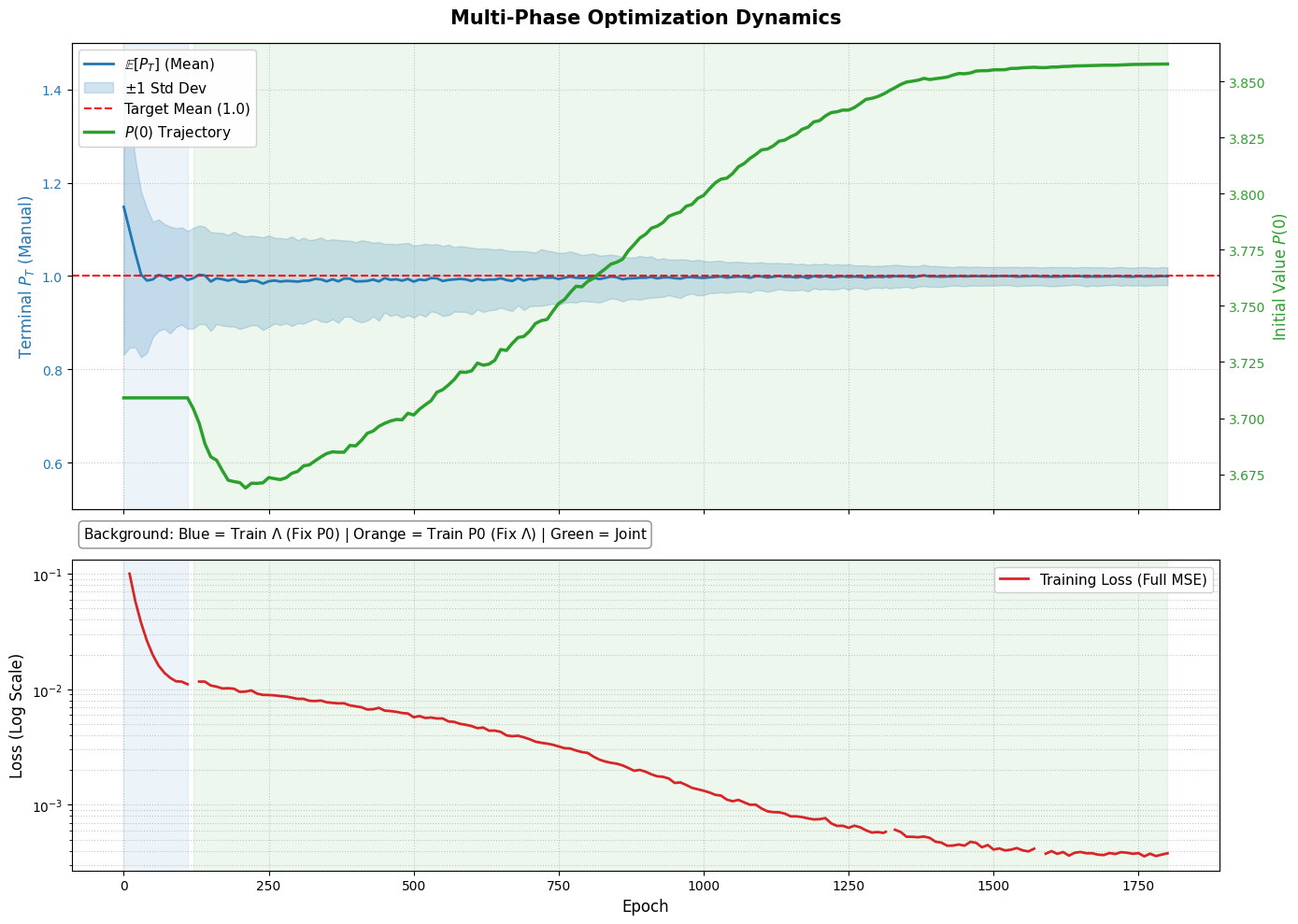}
        \caption{Training: \(\widetilde P_1\)}
        \label{fig:train_tP1}
    \end{subfigure}

    \vspace{0.5cm}

    \begin{subfigure}{0.49\textwidth}
        \centering
        \includegraphics[width=\linewidth]{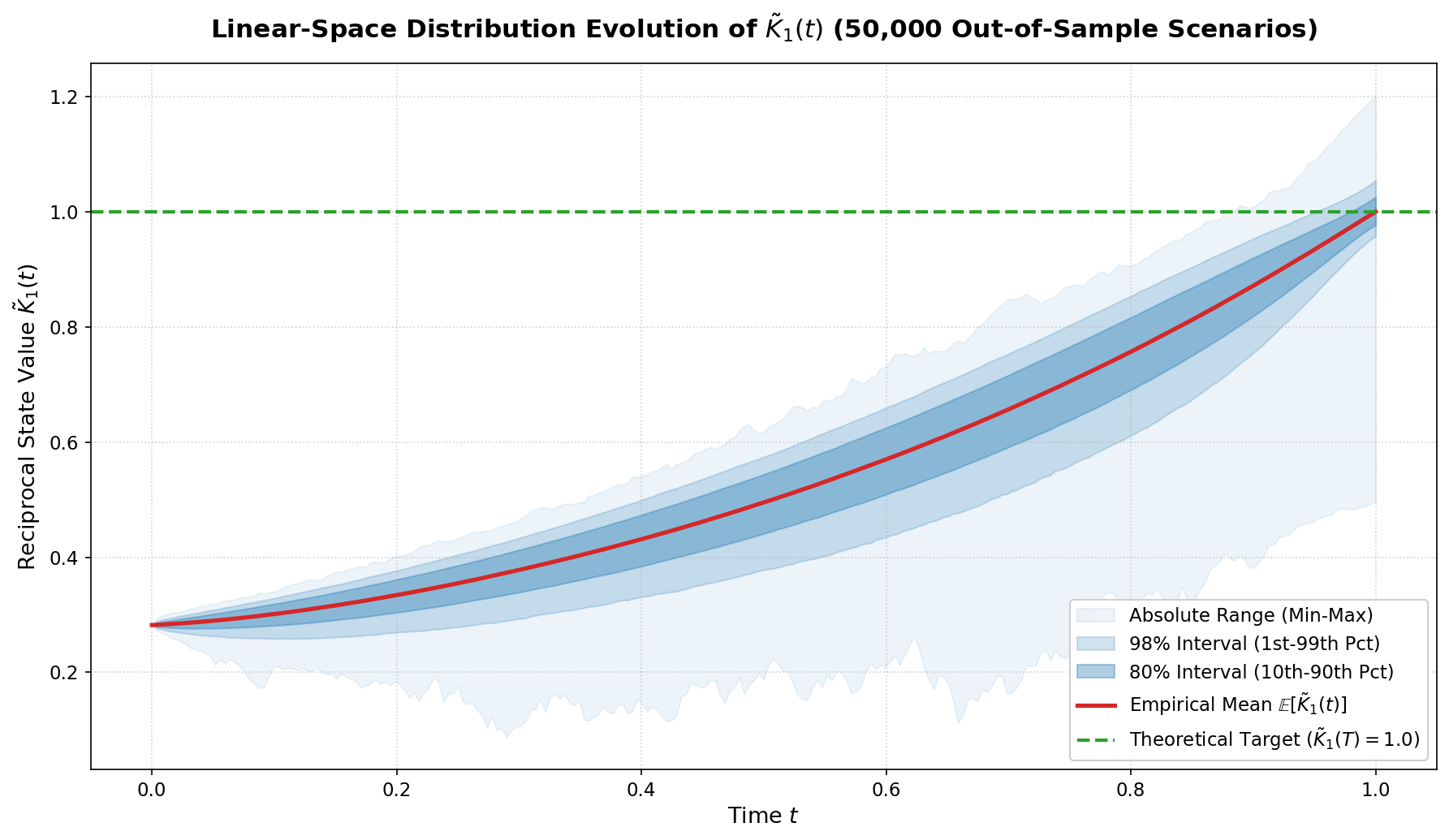}
        \caption{Out-of-sample: \(\widetilde K_1\)}
        \label{fig:oos_tK1}
    \end{subfigure}
    \hfill
    \begin{subfigure}{0.49\textwidth}
        \centering
        \includegraphics[width=\linewidth]{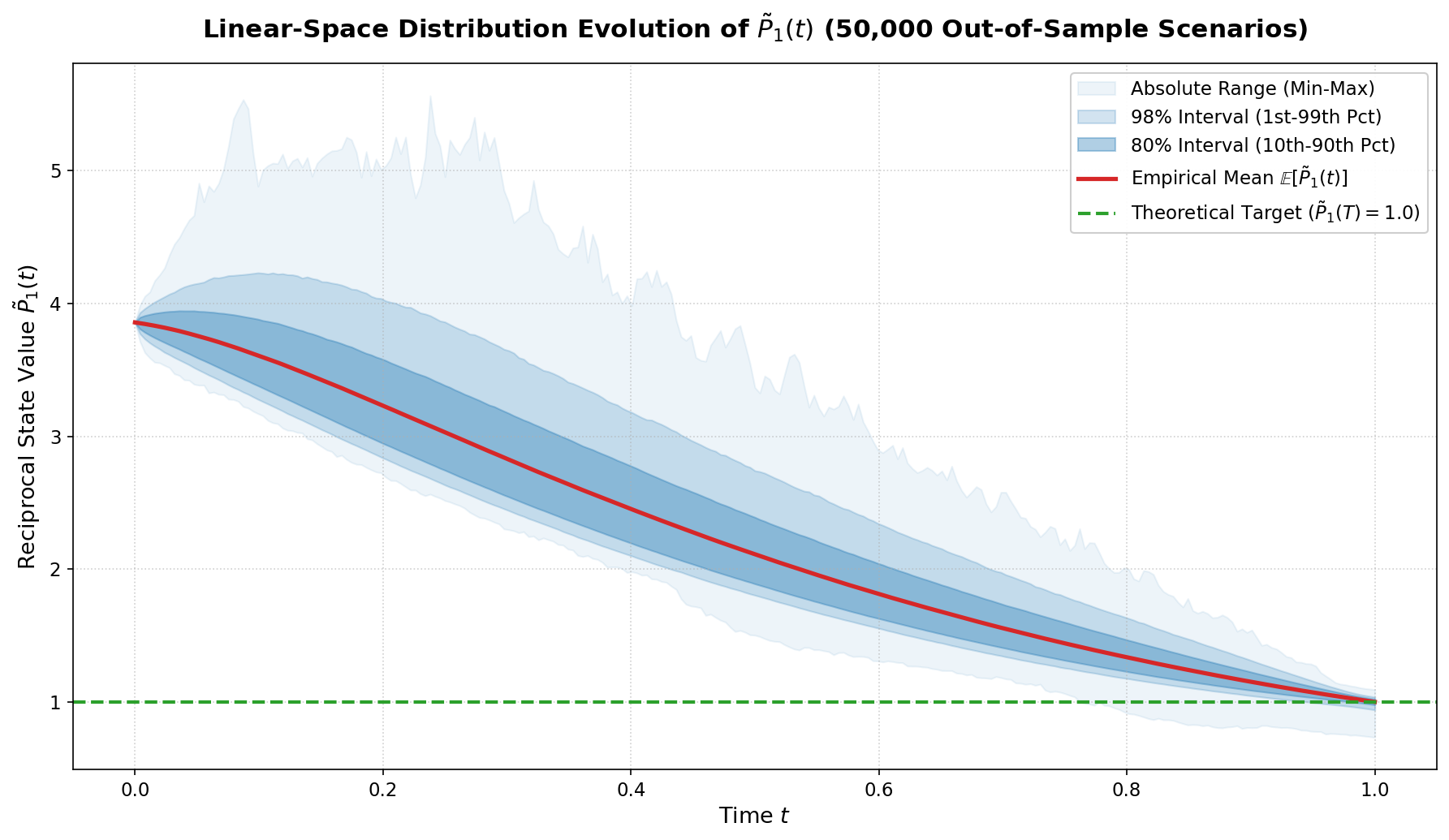}
        \caption{Out-of-sample: \(\widetilde P_1\)}
        \label{fig:oos_tP1}
    \end{subfigure}

    \caption{Training and out-of-sample diagnostics for the linear BSDEs.}
    \label{fig:linear_bsde_results}
\end{figure}

\subsection{Path-Distribution Diagnostics for the Logarithmic BSDE}
\label{app:log_bsde_figures}

Figure~\ref{fig:path_distribution} shows the out-of-sample evolution of the
cross-sectional distribution of \(\check P_t\) under the two neural solvers. The
plots are used as diagnostic illustrations of the transformed dynamics and the
terminal alignment with \(\check P_T=0\).

\begin{figure}[htbp]
    \centering 
    \begin{minipage}{0.48\linewidth}
        \centering
        \includegraphics[width=\linewidth]{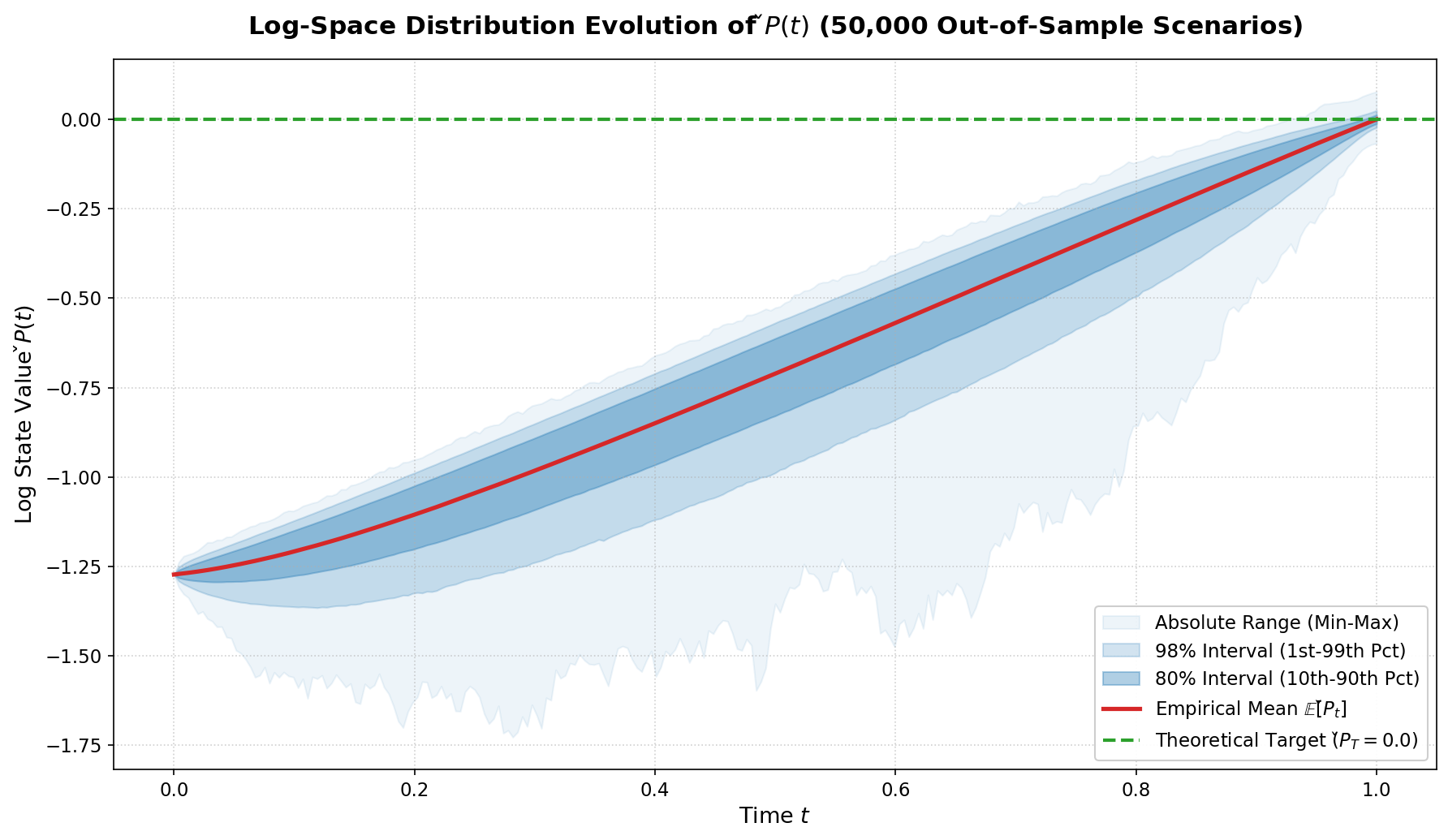}
        \centerline{(a) Deep BSDE method}
    \end{minipage}\hfill
    \begin{minipage}{0.48\linewidth}
        \centering
        \includegraphics[width=\linewidth]{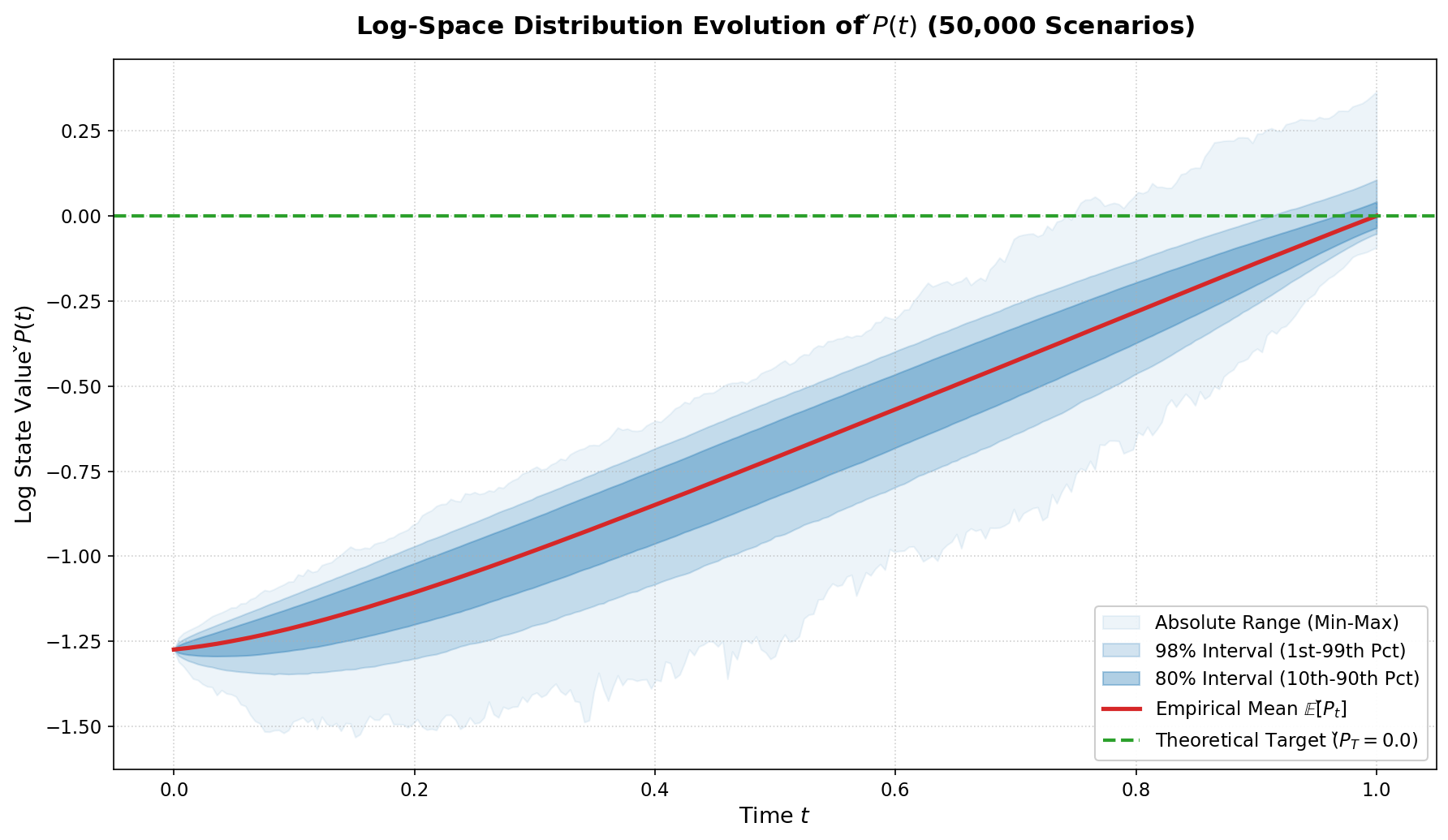} 
        \centerline{(b) DBDP method}
    \end{minipage}
    \caption{Out-of-sample distribution evolution of the transformed process
    \(\check P_t\).}
    \label{fig:path_distribution}
\end{figure}




\newpage
\bibliographystyle{amsplain}

\bibliography{reference}

\end{document}